\def\@seccntformat#1{\csname the#1\endcsname.\ } % the column after a section number
\def\@biblabel#1{#1.} % number style in the bibliography list
\date{}
\newenvironment{proof}[1][\hspace{-1.0ex}]%
{\par\addvspace{1mm}{\it Proof\hspace{1.0ex}{#1}.} }%
{\quad$\blacktriangle$\par\addvspace{1mm}}
\newif\ifNoRemark
\def\addtheorem#1#2#3#4{
\ifthenelse{\equal{#2}{}}{}%
{\ifthenelse{\expandafter\isundefined\csname the#2\endcsname}{\newcounter{#2}}{}}
\newenvironment{#1}[1][\global\NoRemarktrue]% No Remark by default
{\par\addvspace{2mm plus 0.5mm minus 0.2mm}\noindent % new paragraph without indent
{\bf #3}\ifthenelse{\equal{#2}{}}{}%
{\refstepcounter{#2}{\bf ~\csname the#2\endcsname}}%
{\bf \vphantom{##1}\ifNoRemark.\ \else\ (##1).\fi}\begingroup #4}%
   % if the optional parameter contains a comment then it will be printed in parenthesis;
   % to change this, replace (##1)
{\endgroup\par\addvspace{1mm plus 0.5mm minus 0.2mm}\global\NoRemarkfalse}
\expandafter\newcommand\csname b#1\endcsname{\begin{#1}}
\expandafter\newcommand\csname e#1\endcsname{\end{#1}}
}
\title{On the OA(1536,13,2,7) and related orthogonal arrays%
\thanks{This is the final accepted version of the paper published 
in Discrete Mathematics 343(2) (2020),
paper~111659, 1--11. 
\url{https://doi.org/10.1016/j.disc.2019.111659}
distributed under
the Creative Commons CC-BY-NC-ND license
\copyright\,2019 Elsevier B.V.}%%
\thanks{This work was funded by the Russian Science Foundation under grant 18-11-00136}%
% \thanks{The results are presented in part at the 27th British Combinatorial Conference,
% 29 July -- 2 August 2019, University of Birmingham}
}
\author{Denis S. Krotov%
\thanks{Sobolev Inetitute of Mathematics, pr. Akademika Koptyuga 4, Novosibirsk 630090, Russia.
E-mail: \texttt{krotov@math.nsc.ru}}
}
\def\wt{\mathrm{wt}}
\def\Qq#1{Q_{#1}}
\begin{document}
\thispagestyle{empty}
\maketitle

\begin{abstract}
With a computer-aided approach based on the connection with equitable partitions, we establish the uniqueness of the orthogonal array OA$(1536,13,2,7)$, constructed in [D.G.Fon-Der-Flaass. Perfect $2$-Colorings of a Hypercube, Sib. Math. J. 48 (2007), 740--745] as an equitable partition of the $13$-cube with quotient matrix $[[0,13],[3,10]]$. By shortening the OA$(1536,13,2,7)$, we obtain $3$ inequivalent orthogonal arrays OA$(768,12,2,6)$, which is a complete classification for these parameters too.
 
After our computing, the first parameters of unclassified binary orthogonal arrays OA$(N,n,2,t)$ attending the Friedman bound $N\ge 2^n(1-n/2(t+1))$ are OA$(2048,14,2,7)$. Such array can be obtained by puncturing any binary $1$-perfect code of length $15$. We construct orthogonal arrays with these and similar parameters OA$(N=2^{n-m+1},n=2^m-2,2,t=2^{m-1}-1)$, $m\ge 4$, that are not punctured $1$-perfect codes.
 
Additionally, we prove that any orthogonal array OA$(N,n,2,t)$ with even $t$ attending the bound $N \ge 2^n(1-(n+1)/2(t+2))$ induces an equitable $3$-partition of the $n$-cube.
 
 Keywords: orthogonal array, equitable partition, correlation-immune Boolean function, hypercube
 
 MSC: 05B15
\end{abstract}

\section{Introduction}
Orthogonal arrays are combinatorial structures interesting from both theoretical
and practical points of view. In different applications
like design of experiments or software testing, orthogonal arrays 
are important as a good approximation of the Hamming space.
The classification of orthogonal arrays with given parameters is a problem 
that attracts attention of many researchers, see the recent works \cite{BMS:2017:few}, \cite{BulRy:2018}, and the bibliography there.

An orthogonal array OA$(N,n,q,t)$ is an $N$ by $n$ array $A$ with entries 
from $\{0,\ldots,q-1\}$ such that, within any $t$ columns of $A$, 
every ordered $t$-tuple of symbols from $\{0,\ldots,q-1\}$ occurs in exactly 
$\lambda=N/q^t$ rows of $A$.
In this paper, we characterize the orthogonal arrays 
with parameters OA$(1536,13,2,7)$ and OA$(768,12,2,6)$, which are related to each other,
lie on the Bierbrauer--Friedman \cite{Bierbrauer:95,Friedman:92} 
and  Bierbrauer--Gopalakrishnan--Stinson \cite{BGS:96} bounds, 
respectively,
and are also connected with equitable partitions 
of the $13$-cube and the $12$-cube.
These parameters were listed as open questions in Table~12.1 
of the monograph~\cite{HSS:OA}
($k=13$, $t=7$ and $k=12$, $t=6$).
An orthogonal array OA$(1536,13,2,7)$ was constructed by Fon-Der-Flaass 
in \cite{FDF:PerfCol},
in terms of equitable partitions,
as a special case of a general construction, 
and OA$(768,12,2,6)$ is obtained
from  OA$(1536,13,2,7)$ by shortening.
The orthogonal-array (correlation-immune) properties of equitable partitions were 
known, see e.g. \cite{FDF:CorrImmBound},
but not mentioned in \cite{FDF:PerfCol},
and the construction in that paper was unnoticed 
by specialists in orthogonal arrays for some time, see, e.g., 
Table~5 in \cite{BMS:2017:few},
where these parameters are still marked as unsolved.

Utilizing local properties of equitable partitions, 
we use exhaustive computer search
to find all OA$(1536,13,2,7)$ up to equivalence 
and establish that there is only one
equivalence class of such orthogonal arrays.
A similar approach was already used in \cite{KroVor:CorIm} to characterize
the orthogonal arrays OA$(1024, 12, 2, 7)$.
However, the direct generalization 
of the approach of \cite{KroVor:CorIm} 
did not work for OA$(1536,13,2,7)$,
and in this paper we modify it,
considering the value of the first coordinate separately from the others.
All calculations took about one core-year of computing on a 2GHz processor.
% :
% the instances of the exact-cover problem to be solved at each step were too large to
% solve them by modern software.
% To make them smaller, we adopt the approach
% of \cite{KroVor:CorIm} in Section~\ref{s:class} 
% and divide the vertices of some weight
% into two groups, 
% depending on the value of the first coordinate.
% As a result we divide the classification into more steps and reduce
% the size of the exact-cover problem at each step.
% This modified approach had a success, 
% and we got the classification in only one core-year 
% of computing time. 
By shortening the unique OA$(1536,13,2,7)$, we find all inequivalent OA$(768,12,2,6)$.

The small parameters of binary 
orthogonal arrays attending the Friedman bound $N\ge 2^n(1-n/2(t+1))$
\cite[Theorem~2.1]{Friedman:92}
and satisfying the Fon-Der-Flaass--Khalyavin bound
$t\le 2n/3-1$ \cite{FDF:CorrImmBound,Khalyavin:2010.en}
are shown in Table~\ref{t:1}.
% (we omit the parameters that attends the  Friedman bound, but do not fit the Fon-Def-Flaass--Khalyavin bound
% $t\le 2n/3-1$ for $N < 2^n$ \cite{FDF:CorrImmBound,Khalyavin:2010.en}, e.g., OA$(640,11,2,7)$).
\begin{table}[ht]
$$
\begin{tabular}{lll}
OA & quotient matrix & {number of equivalence classes} \\ \hline
 OA$(2,3,2,1)$ & $[[0,3],[1,2]]$ & $1$  \\
 OA$(16,6,2,3)$ & $[[0,6],[2,4]]$ & $1$   \\
 OA$(16,7,2,3)$ & $[[0,7],[1,6]]$ & $1$ \quad (the Hamming $(7,16,3)$ code) \\
 OA$(128,9,2,5)$ & $[[0,9],[3,6]]$ & $2$ \quad (see \cite{Kirienko2002}) \\
 OA$(1024,12,2,7)$ & $[[0,12],[4,8]]$ & $16$ \quad (see \cite{KroVor:CorIm}) \\
 OA$(1536,13,2,7)$ & $[[0,13],[3,10]]$ & $1$ \quad (Theorem~\ref{th:13}) \\
 OA$(2048,14,2,7)$ & $[[0,14],[2,12]]$ & $>14960$ \quad (Corollary~\ref{c:14}) \\ 
 %punctured $1$-perfect codes and other \\
 OA$(2048,15,2,7)$ & $[[0,15],[1,14]]$ & 5983 \quad ($1$-perfect $(15,2^{11},3)$ codes \cite{OstPot:15}) \\ 
  OA$(8192,15,2,9)$ & $[[0,15],[5,10]]$ & ?  \\ \hline 
\end{tabular}
$$
\caption{A list of small parameters OA$(N,n,2,t)$ of binary orthogonal arrays satisfying
$N= 2^n(1-n/2(t+1))$, $t \le 2n/3-1$, and the corresponding equitable partitions}
\label{t:1}
\end{table}
The first unclassified case is OA$(2048,14,2,7)$.
It can be shown that puncturing (projecting in one coordinate)
 any $1$-perfect binary code of length $15$ gives an orthogonal array 
with these parameters.
Such codes, of parameters $(15,2^{11},3)$, were classified by \"Osterg{\aa}rd and Pottonen 
\cite{OstPot:15}, and all the punctured codes can be derived from that classification.
In Section~\ref{s:14}, we show that this is not enough for the complete classification
of OA$(2048,14,2,7)$: there are such orthogonal arrays that are not punctured $1$-perfect codes.

The parameters OA$(768,12,2,6)$ attend the general theoretical bound
$N\ge 2^n - 2^{n-2}(n+1)/\lceil (t+1)/2 \rceil $ \cite{BGS:96} for 
OA$(N,n,2,t)$. As a theoretical contribution in addition to the computational results, 
in this paper 
we prove that the orthogonal arrays attending this bound 
are in one-to-one correspondence 
with the equitable $3$-partitions with a special quotient matrix.
Thus, one more family is added to the collection of classes of optimal objects
(e.g., perfect and nearly perfect codes, 
some other classes of codes
\cite{Kro:2m-3,Kro:2m-4},
orthogonal arrays \cite{Pot:2012:color}, 
correlation-immune functions \cite{FDF:CorrImmBound})
whose parameters guarantee that they can be described in terms of equitable partitions.

The paper is organized as follows.
In the next section, we define the basic concepts,
mainly related with orthogonal arrays and equitable partitions,
and mention some basic theoretical facts.
In Section~\ref{s:oa-eq}, we consider 
known and new (Theorem~\ref{th:3part})
general theoretical results connecting
equitable partitions and orthogonal arrays.
In Section~\ref{s:class}, we describe the classification approach.
The results of the classification of the orthogonal arrays OA$(1536,13,2,7)$ and OA$(768,12,2,6)$ can be found in Section~\ref{s:res}.
In Section~\ref{s:constr}, we describe the unique  OA$(1536,13,2,7)$ 
in two ways, by the Fon-Der-Flaass construction and by the Fourier transform.
Section~\ref{s:14} is devoted to the orthogonal arrays  OA$(2048,14,2,7)$
and arrays with similar parameters.
In the concluding section, 
we highlight some open research problems.

%===============================================
%===============================================
%===============================================
\section{Definitions}\label{s:def}

\begin{definition}[graphs and related concepts]
A (simple) \emph{graph} is a pair $(V,E)$ of a set $V$, 
whose elements are called \emph{vertices},
and a set $E$ of $2$-subsets of $V$, 
called \emph{edges}. 
Two vertices in the same edge are called \emph{neighbor}, or \emph{adjacent},
to each other. The number of neighbors of a vertex is referred to as its \emph{degree}.
A graph whose vertices have the same degree is called \emph{regular}.
An \emph{isomorphism} between two graphs is a bijection between their vertices that induces
a bijection between the edges.
Two graphs are \emph{isomorphic} if there is an isomorphism between them.
An \emph{automorphism} of a graph is an isomorphism to itself.
A set of vertices of a graph is called \emph{independent} if it does not 
include any edge.
\end{definition}

\begin{definition}[Hamming graphs and related concepts]
The \emph{Hamming graph} $H(n,q)$
is a graph whose vertex set is the set $\{0, 1,\ldots,q-1\}^n$ 
of the words of length $n$ over the alphabet 
$\{0, 1,\ldots,q-1\}$.
Two vertices are adjacent if and only if they differ in exactly one coordinate position,
which is referred to as the \emph{direction} of the corresponding edge.
The \emph{Hamming distance} $d(\bar x,\bar y)$ between vertices $\bar x$ and $\bar y$ 
is the number of coordinates in which they differ.
The \emph{weight} $\wt(\bar x)$ of a word $\bar x$ 
is the number of nonzero elements in it.
In this paper, we focus on the binary Hamming graph $H(n,2)$,
also known as the \emph{$n$-cube} $\Qq{n}$. 
The vertices of $Q_n$ are also considered as vectors over the $2$-element field
GF$(2)$, with the coordinate-wise addition and multiplication by a constant.

For two words $\bar u$ and $\bar v$, we denote by $\bar u|\bar v$ their concatenation.
The all-zero word and the all-one word are denoted by $\bar 0$ and $\bar 1$, respectively
(the length is usually clear from the context).
\end{definition}

\begin{definition}[orthogonal arrays and related concepts]
An \emph{orthogonal array} OA$(N,n,q,t)$ is a multiset $C$
of vertices of $H(n,q)$ of cardinality $N$ such that every 
subgraph isomorphic to $H(n-t,q)$ contains exactly
$N/q^{t}$ elements of $C$.
An {orthogonal array} is \emph{simple} if it is a usual set; that is,
if it does not contain elements of multiplicity more than $1$.
% A (Boolean) function $\{0,1\}^n\to \{0,1\}$ is called \emph{$t$-th order correlation immune} 
% if the set of vectors with function value $1$ is an OA$(N,n,2,t)$ 
% (so, the $t$-th order correlation immune functions are in one-to-one correspondence 
% with the simple binary (i.e., $q=2$) orthogonal arrays of strength $t$).
Two orthogonal arrays are \emph{equivalent} if some automorphism of $H(n,q)$
induces a bijection between their elements.
The \emph{automorphism group} $\mathrm{Aut}(C)$ of an orthogonal array $C$
(as well as any other set $C$ of vertices of $H(n,q)$)
consists of all automorphisms of $H(n,q)$ that stabilize $C$ set-wise.
The \emph{orbit} of a vertex $\bar v$ under the action of  $\mathrm{Aut}(C)$ is the 
set of images $A(\bar v)$ over all $A$ from $\mathrm{Aut}(C)$.
The \emph{kernel} of $C\subseteq \{0,1\}^n$ is the set $\{\bar k\in \{0,1\}^n \,:\, \bar k+C=C\}$ 
of all its periods.
For any set $C$ of vertices of $H(n,q)$, by $\overline C$ we denote its complement
$\{0, 1,\ldots,q-1\}^n \backslash C$. Obviously, the complement
of a simple  OA$(N,n,q,t)$ is a simple OA$(q^n-N,n,q,t)$.
We say that an orthogonal array $C'$ is obtained from an  orthogonal array $C$
by \emph{$a$-shortening}, or simply by \emph{shortening}, 
in the $i$-th position (by default, in the last position)
if $C'$ is obtained from $C$ by choosing all words with a symbol $a\in\{0,\ldots,q-1\}$ 
in the $i$-th position and removing this symbol in this position from all chosen words.
\end{definition}

\begin{remark}
In the current paper, motivated by the coding-theory 
technique used in this research,
we consider orthogonal arrays as sets of vertices
of the Hamming graph (for generality, they are defined as multisets, 
but all considered arrays are simple). 
The definition above is equivalent to the traditional 
definition mentioned in the introduction, 
but uses a different language.
According to our definition, the elements of the array are words, corresponding
to the rows (\emph{runs}) in the traditional definition, 
and the positions in the words, or the coordinates, numbered from $1$ to $n$,
correspond to the columns (\emph{factors}) in the traditional definition.
 In the classical literature on orthogonal arrays,
%  they are often represented as $N\times n$ or $n\times N$ arrays,
%  the elements of the multiset corresponding 
%  to the rows or the columns of 
%  the array and being referred to as the \emph{runs}.
 the parameters $N$, $n$, $q$, $t$, $\lambda=N/q^{t}$
 are known as respectively
 the number of (experimental) runs, 
 the number of factors,
 the number of levels, 
 the strength, 
 and the index of the orthogonal array.
\end{remark}

\begin{definition}[equitable partitions]
Let $G=(V,E)$ be a graph. 
A partition $(C_0,\ldots, C_{k-1})$
of the set $V$ is an \emph{equitable partition} 
(in some literature, 
\emph{regular partition}, 
\emph{perfect coloring},
or  \emph{partition design}),
or \emph{equitable $k$-partition},
with the 
\emph{quotient matrix}
$S=(s_{ij})$ if for all $i$ and $j$ from $\{0, \ldots, k-1\}$ 
every vertex of $C_i$ has exactly $s_{ij}$ neighbors in $C_j$.
Below, for convenience, a $2\times 2$ quotient matrix will be
represented by its row list: 
$ 
\left(\begin{array}{cc}a&b\\c&d\end{array}\right)=[[a,b],[c,d]].
$
\end{definition}

%==============================================
%==============================================
%==============================================
\section{Connections between orthogonal arrays and equitable partitions}\label{s:oa-eq}

The following folklore fact
establishes the orthogonal-array properties of an equitable partition.
For equitable $2$-partitions of $\Qq{n}$ (the case we focus on), 
it can be found, e.g., in~\cite{FDF:CorrImmBound}.
\begin{pro} \label{p:equit-oa}
 Each cell $C$ of an equitable partition of $H(n,q)$ with quotient matrix $S$
is a simple
 OA$(|C|,n,q,t)$, 
 where $t=\frac{n(q-1)-\theta}{q} - 1$
 and $\theta$ is the second largest eigenvalue of $S$.
 In particular, $t=\frac{b+c}{q} - 1$ if $S=[[a,b],[c,d]]$.
\end{pro}
\begin{proof}[(a sketch)]
 It follows from the general theory of equitable partitions \cite{Gogsil:93:equitable} 
 that the characteristic 
 $\{0,1\}$-function of every cell of the equitable partition can be represented as the sum of eigenfunctions of the graph
 with eigenvalues that coincide with eigenvalues of the quotient matrix $S$.
 The eigenspaces of the Hamming graph have very convenient bases from so-called \emph{characters} 
 (see Section~\ref{s:Fr} for the definition in the binary case).
 It is straightforward to check that for every character corresponding to a non-largest eigenvalue
 of $S$, the sum of values over the vertices of a subgraph isomorphic to $H(n-t,q)$ is $0$.
 For the largest eigenvalue, an eigenfunction is a constant function. 
 This means that $C$ 
 has a constant number of vertices 
 in every subgraph isomorphic to $H(n-t,q)$; 
 i.e., it is an OA$(|C|,n,q,t)$.
\end{proof}

In some cases, 
the parameters of an orthogonal array guarantee that it is a cell
of an equitable partition.
One of the known bounds on the parameters of orthogonal arrays,
proved by Friedman \cite[Theorem~2.1]{Friedman:92} for the binary
case $q=2$ and by Bierbrauer \cite{Bierbrauer:95}
for an arbitrary $q$, says that the size $N$ of
an OA$(N,n,q,t)$ satisfies the inequality
\begin{equation}\label{eq:bier}
 N\ge q^n\left(1-\frac{(q-1)n}{q(t+1)}\right).
\end{equation}
As follows from the proof, see \cite[p.\,181, line 4]{Bierbrauer:95},
the inequality is strict for non-simple arrays (with repeated elements).
Moreover, an orthogonal array that attains this bound
is an independent set and
forms an equitable $2$-partition,
in the pair with its complement.

\begin{pro}[\cite{Potapov:2010} ($q=2$), 
\cite{Pot:2012:color}]
\label{p:oa-equit}
If \eqref{eq:bier} holds with equality for some OA$(N,n,q,t)$ $C$,
then $(C,\overline C)$ is an equitable partition with quotient matrix
$$
\left(\begin{array}{cc}
0 & (q{-}1)n \\ q(t{+}1){-}(q{-}1)n   & 2(q{-}1)n{-}q(t{+}1)
      \end{array}\right),
\ \mbox{in particular, }
\left(\begin{array}{cc}
0 & n \\ 2(t{+}1){-}n   & 2n{-}2(t{+}1)
      \end{array}\right)
\mbox{ if $q=2$}.
$$
\end{pro}

So, by Propositions~\ref{p:equit-oa} and~\ref{p:oa-equit}, 
there is a bijection between the orthogonal arrays attaining the Bierbrauer--Friedman bound
and the equitable $2$-partitions of the Hamming graph with 
the first coefficient of the quotient
matrix being $0$.

Next, we consider a bound for binary orthogonal arrays of even strength,
which follows straightforwardly from the Friedman bound and the following fact
about lengthening a binary array of even strength
(this fact can be considered as a dual analog of the possibility of extending
a binary $(n,M,2e+1)$ code to an $(n+1,M,2e+2)$ code, well known in the theory 
of error-correcting codes, see e.g. \cite[1.9(I)]{MWS}).

\begin{pro}[{\cite[Proposition~2.3]{SeiZem:OA:66}}]\label{p:short}
If $t$ is even, then every orthogonal array 
OA$(N,n,2,t)$ can be obtained from some 
OA$(2N,n+1,2,t+1)$ by shortening.
Specifically, if $C$ is an OA$(N,n,2,t)$,
then $C|0 \cup C'|1$, where $C'=C+\bar 1$, is an OA$(2N,n+1,2,t+1)$.
\end{pro}

As was noted by V.Levenshtein 
(cited in \cite{BGS:96} as a private communication), 
Proposition~\ref{p:short}
with \eqref{eq:bier} imply the inequality
\begin{equation}\label{eq:lev}
 N\ge 2^n\left(1-\frac{n+1}{2(t+2)}\right)
\end{equation}
for the parameters of a binary orthogonal array OA$(N,n,2,t)$ of even strength $t$.
We can note that every orthogonal array attending  bound \eqref{eq:lev}
is a cell of an equitable $3$-partition.

\begin{theorem}\label{th:3part}
 Assume that $C$ is an orthogonal array OA$(N,n,2,t)$ of even strength $t$ meeting 
 \eqref{eq:lev} with equality. 
 If $C'=C+\bar 1 = \{ \bar c + \bar 1 \,:\, \bar c \in C \}$ and 
 $C''=\{0,1\}^n\backslash (C \cup C') $,
 then
 $(C, C', C'')$ is an equitable partition 
 with quotient matrix 
 \begin{equation}\label{eq:qm3}
 \left(
 \begin{array}{ccc}
  0 & 2t{-}n{+}2 & 2n{-}2t{-}2 \\
  2t{-}n{+}2 & 0 & 2n{-}2t{-}2 \\
  2t{-}n{+}3 & 2t{-}n{+}3 & 3n{-}4t{-}6 \\
 \end{array}
 \right)
 =
 \left(
 \begin{array}{ccc}
  0 & a & n{-}a \\
  a & 0 & n{-}a \\
  a{+}1 & a{+}1 & n{-}2a{-}2 \\
 \end{array}
 \right)
 ,
 \quad \mbox{where $a=2t-n+2$.}
 \end{equation}
\end{theorem}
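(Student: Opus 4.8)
The plan is to reduce the claim to the already-established equitable-partition property of arrays meeting the Bierbrauer--Friedman bound \eqref{eq:bier}, via the lengthening construction of Proposition~\ref{p:short}. First I would form the lengthened array $\hat C = C|0 \cup C'|1$, which by Proposition~\ref{p:short} is an OA$(2N,n+1,2,t+1)$ (and is simple by construction, the two layers being distinguished by the last coordinate). A direct substitution shows that the hypothesis $N = 2^n\bigl(1 - \tfrac{n+1}{2(t+2)}\bigr)$ is exactly the statement that $\hat C$ meets \eqref{eq:bier} (for $q=2$, with $n$ replaced by $n+1$ and $t$ by $t+1$) with equality. Hence Proposition~\ref{p:oa-equit} applies to $\hat C$ and shows that $(\hat C, \overline{\hat C})$ is an equitable $2$-partition of $\Qq{n+1}$; substituting the parameters into its quotient matrix gives $[[0,n+1],[2t-n+3,2n-2t-2]]$.

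Next I would transfer this information back down to $\Qq{n}$ by viewing each vertex of $\Qq{n+1}$ as a pair $(\bar x, b)$ with $\bar x \in \Qq{n}$ and $b \in \{0,1\}$, so that $(\bar x,b) \in \hat C$ iff $\bar x \in C$ (when $b=0$) or $\bar x \in C'$ (when $b=1$); the $n+1$ neighbours of $(\bar x,b)$ then split into the $n$ \emph{horizontal} neighbours $(\bar y, b)$ with $\bar y \sim \bar x$ and the single \emph{vertical} neighbour $(\bar x, 1-b)$. The first consequence is that $\hat C$ is independent (its row of the quotient matrix starts with $0$), which forbids $(\bar x,0)$ and $(\bar x,1)$ from both lying in $\hat C$; this yields $C \cap C' = \varnothing$, so that $(C,C',C'')$ is genuinely a partition of $\Qq{n}$. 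Independence also shows at once that every $\bar x\in C$ has no neighbour in $C$, giving $s_{00}=0$.

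The heart of the computation is then a short count of the $2t-n+3$ neighbours (read off from the second row of the quotient matrix) that a vertex of $\overline{\hat C}$ has in $\hat C$. For $\bar x \in C$ the vertical image $(\bar x,1)$ lies in $\overline{\hat C}$ (since $\bar x\notin C'$); its vertical neighbour $(\bar x,0)$ is in $\hat C$, so exactly $2t-n+3-1 = a$ of its horizontal neighbours $(\bar y,1)$ are in $\hat C$, i.e.\ $\bar x$ has exactly $a=2t-n+2$ neighbours in $C'$ and hence $n-a$ in $C''$. For $\bar x\in C''$ the vertex $(\bar x,0)$ lies in $\overline{\hat C}$ while its vertical neighbour $(\bar x,1)$ is not in $\hat C$, so all $2t-n+3 = a+1$ of its $\hat C$-neighbours are horizontal, i.e.\ $\bar x$ has $a+1$ neighbours in $C$. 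Finally, $\bar z\mapsto \bar z+\bar 1$ is an automorphism of $\Qq{n}$ interchanging $C$ and $C'$ and fixing $C''$; applying it converts the two counts just obtained into the remaining rows, giving $s_{11}=0$, $s_{10}=a$, $s_{12}=n-a$ and $s_{21}=s_{20}=a+1$, whence $s_{22}=n-2a-2$. Assembling these constants reproduces exactly the matrix \eqref{eq:qm3}.

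The steps are all routine bookkeeping once the right object is in hand; the conceptual crux, and the only place needing genuine care, is recognising that the even-strength hypothesis is precisely what places the lengthened array $\hat C$ on the Bierbrauer--Friedman bound, so that the three-cell structure in $\Qq{n}$ is the shadow of a two-cell equitable partition one dimension up. Beyond that, I expect no real obstacle except the careful accounting of the vertical neighbour in each local count (it belongs to $\hat C$ for $\bar x\in C$ but not for $\bar x\in C''$), since a slip there would shift every off-diagonal entry by one.
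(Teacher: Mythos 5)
Your proposal is correct and follows essentially the same route as the paper: lengthen $C$ to $B=C|0\cup C'|1$ via Proposition~\ref{p:short}, observe that equality in \eqref{eq:lev} is equality in \eqref{eq:bier} for $B$, apply Proposition~\ref{p:oa-equit} to get the quotient matrix $[[0,n{+}1],[a{+}1,n{-}a]]$, and then count neighbours of $\bar v|0$ and $\bar v|1$ separately. The only cosmetic difference is that you obtain the second row and the entry $s_{21}$ from the $\bar z\mapsto\bar z+\bar1$ symmetry rather than by the direct count the paper sketches, which changes nothing of substance.
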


\begin{proof}
Denote $C'=C+\bar 1$ and  $B=C|0 \cup C'|1$.
 By Proposition~\ref{p:short}, $B$
 is an orthogonal array 
 OA$(2N,n+1,2,t+1)$.
 By Proposition~\ref{p:oa-equit},
 $(B,\overline B)$ is an equitable partition with quotient matrix 
 $$
\left(\begin{array}{cc}
0 & n+1 \\ 2t{-}n{+}3   & 2n{-}2t{-}2
      \end{array}\right)
      =
      \left(\begin{array}{cc}
0 & n+1 \\ a{+}1   & n{-}a
      \end{array}\right),
      \quad
      \mbox{where $a=2t-n+2$.}
$$
 (note that $n$ and $t$ in Proposition~\ref{p:oa-equit} 
 correspond respectively to $n+1$ and $t+1$ in our case).
 Denoting $C''=\{0,1\}^n \backslash (C \cup C')$,
 we observe 
 that $(C,C',C'')$ is a partition of $\{0,1\}^n$ 
 (indeed, $C$ and $C'$ are disjoint because $B$ is an independent set).
 It remains to check that it is an equitable partition.
 
 Consider a vertex $\bar v$ from $C$. 
 As $B$ is an independent set, $C$ is an independent set too,
 and $\bar v$ has $0$ neighbors in $C$.
 Moreover, $\bar v|0$ from $B$ has no neighbors in $B$,
 and $\bar v|1$ from $\overline B$ has $a+1$ neighbors in $B$;
 one of them is  $\bar v|0$ and the other $a$ are in $C'|1$.
 Hence, $\bar v$ has $a$ neighbors in $C'$.
 The other neighbors of $\bar v$ are in $C''$, 
 and the first row of the quotient matrix  \eqref{eq:qm3} is confirmed.
 The second row is similar.
 For the third row, consider
  a vertex $\bar u$ from $C''$. Both $\bar u|0$ and $\bar u|1$ are in $\overline B$.
  Each of them has $a+1$ neighbors in $B$, but those neighbors of $\bar u|0$ are
  in $C|0$, while those neighbors of $\bar u|1$ are
  in $C'|1$. So, $\bar u$ has exactly $a+1$ neighbors in $C$ and exactly $a+1$ neighbors in $C'$;
  the third row of the quotient matrix is confirmed.
%   It remains to show that $C'=C+\overline 1$. But this is equivalent to 
%   $B=B+\overline 1$, which is a well-known antipodal property of the 
%   equitable $2$-partitions of the hypercube with non-symmetric quotient matrix
%   (a direct corollary of distance-invariant properties of equitable partitions, 
%   see e.g. \cite{Kro:struct}).
\end{proof}

\begin{remark}
The equitable partitions with quotient matrices \eqref{eq:qm3} are connected (in a one-to-one manner)
 with a special class of
 completely regular codes. A \emph{completely regular code} of covering radius $\rho$ 
 is the first cell of an equitable $(\rho+1)$-partition with a tridiagonal quotient matrix.
 We start from an equitable partition $(C,C',C'')$ with quotient matrix \eqref{eq:qm3}.
Divide each of $C$, $C'$ into two subsets, 
 respectively
 $C_{\mathrm{even}}$ and  $C_{\mathrm{odd}}$,
 $C'_{\mathrm{even}}$ and  $C'_{\mathrm{odd}}$,
 according to the parity of the weight of vertices.
 It is straightforward to check that
 $(C_{\mathrm{even}} \cup C'_{\mathrm{odd}},C'',C'_{\mathrm{even}} \cup C_{\mathrm{odd}})$
 is an equitable partition with quotient matrix 
 $$
 \left(
 \begin{array}{ccc}
  a& n{-}a & 0  \\
  a{+}1 & n{-}2a{-}2 & a{+}1 \\
  0& n{-}a & a  \\
 \end{array}
 \right).
 $$
 So, $C_{\mathrm{even}} \cup C'_{\mathrm{odd}}$ (as well as $C'_{\mathrm{even}} \cup C_{\mathrm{odd}}$) is a completely regular code.
\end{remark}

The parameters of orthogonal arrays OA$(1536,13,2,7)$ lie on 
the Bierbrauer--Friedman bound \eqref{eq:bier}.
It is straightforward to see that the corresponding quotient matrix  
is \linebreak[4]
$[[0,b],[c,d]]=[[0,13],[3,10]]$ (indeed,
$0+b=c+d=13$ and $c:(b+c)=1536:2^{13}$);
equitable partitions with this quotient matrix are known to exist
\cite[Proposition~2]{FDF:PerfCol}.
One of the main results of the current research is establishing that 
the OA$(1536,13,2,7)$
constructed in \cite{FDF:PerfCol} in terms of perfect colorings (equitable partitions)
is unique up to equivalence.
The related parameters OA$(768,12,2,6)$ attend bound \eqref{eq:lev}, 
and correspond (Theorem~\ref{th:3part}) to the quotient matrix $[[0,2,10],[2,0,10],[3,3,6]]$;
the characterization of such orthogonal arrays is derived from the uniqueness
of OA$(1536,13,2,7)$.

In the end of this section, for completeness, 
we mention another interesting bound that relates 
orthogonal arrays with equitable partitions.
Fon-Der-Flaass proved \cite{FDF:CorrImmBound}
that any simple OA$(N,n,2,t)$ such that $N \not\in\{0,2^{n-1}, 2^n\}$
% (or, equivalently, a non-constant non-balanced Boolean function of
% correlation-immunity order $t$)
satisfies $t\le \frac{2n}{3}-1$, and in the case of equality, the OA is a cell
of an equitable $2$-partition of $\Qq{n}$.
For example, simple orthogonal arrays of parameters OA$(1792,12,2,7)$ correspond to equitable partitions with quotient matrix
$[[3,9],[7,5]]$, constructed in \cite{FDF:12cube.en}.
Later, Khalyavin \cite{Khalyavin:2010.en} proved that this
bound  also holds if $0 < N < 2^{n-1}$ and we do not require the 
orthogonal array to be simple. 
However,
in contrast to the case of the Bierbrauer--Friedman bound,
the orthogonal arrays on the
 Fon-Der-Flaass--Khalyavin bound 
are not necessarily simple
(e.g., there exists a non-simple OA$(24,6,2,3)$ \cite{Taran:private2018}),
and hence not connected with equitable partitions in general.

%====================================================
%====================================================
%====================================================
\section[Classification of OA(1536,13,2,7)]{Classification of OA$(1536,13,2,7)$}\label{s:class}

For classification by exhaustive search, 
we use an approach based 
on the local properties of the equitable partitions. 
Say that the pair of disjoint sets $P_+$,
$P_-$ 
of vertices of $\Qq{13}$ is an \emph{$(r_0,r_1)$-local partition}
if 
\begin{itemize}

 \item[(I)] 
 $P_+ \cup P_-$ 
are the all words starting with $0$
and having weight at most $r_0$
or starting with $1$
and having weight at most $r_1$;

\item[(II)] 
$P_+$ contains the all-zero word $\bar 0$;

\item[(III)] 
$P_+$ is an independent set;
\item[(IV)] 
the neighborhood of every vertex 
$\bar v=(v_1,\ldots,v_{13})$
of weight less than $r_{v_1}$ satisfies the local condition 
from the definition 
of an equitable partition 
with quotient matrix 
$[[0,13],[3,10]]$ 
(that is, if $\bar v\in P_+$ then 
the whole neighborhood of $\bar v$
is included in $P_-$;
if $\bar v\in P_-$ then
the neighborhood has exactly $3$
elements in $P_+$ and $10$ in $P_-$).
\end{itemize}
Two $(r_0,r_1)$-local partitions
$(P_+,P_-)$ and $(P'_+,P'_-)$
are  \emph{equivalent}
if there is a permutation
of coordinates that fixes the first
coordinate and sends
$P_+$ to $P'_+$.

We classify all inequivalent 
$(r_0,r_1)$-local partitions subsequently
for $(r_0,r_1)$ equal
$(2,2)$, $(2,3)$, $(2,4)$, $(3,4)$, $(4,4)$, $(4,5)$, $(5,5)$, $(13,13)$, where 
$(13,13)$ corresponds to the complete equitable partitions.
In an obvious way, every equitable partition $(C,\overline C)$ such that $\bar 0\in C$ includes
a $(5,5)$-local partition $(P_+^{(5,5)},P_-^{(5,5)})$, 
$P_+^{(5,5)} \subset C$ and $P_-^{(5,5)} \subset \overline C$, 
every $(5,5)$-local partition $(P_+^{(5,5)},P_-^{(5,5)})$ includes
a $(4,5)$-local partition $(P_+^{(4,5)},P_-^{(4,5)})$, 
$P_+^{(4,5)} \subseteq P_+^{(5,5)}$ and $P_-^{(4,5)} \subseteq P_-^{(5,5)}$, 
and so on.
So, the strategy is to reconstruct, 
in all possible ways, 
a $(r_0,r_1)$-local partition from
each of the inequivalent $(r_0-1,r_1)$-local 
or $(r_0,r_1-1)$-local partitions,
and then to choose and keep only inequivalent solutions,
one representative for each equivalence class found.
Our classification is divided into the following steps.

\begin{itemize}

 \item[1.] (Section \ref{s:22}.) 
 Manually characterizing the $(2,2)$-local partitions, up to equivalence.
 
 \item[2.] (Section \ref{s:22-55}.) 
 Characterizing, up to equivalence,
the $(2,3)$-local partitions 
 based on the known representatives of
  $(2,2)$-local partitions 
  and
  using the exact-covering software \cite{KasPot08}.  
  Similarly,
  from $(2,3)$ to $(2,4)$,
  from $(2,4)$ to $(3,4)$,
  from $(3,4)$ to $(4,4)$,
  from $(4,4)$ to $(4,5)$,
  from $(4,5)$ to $(5,5)$.
  Equivalence is recognized using the graph-isomorphism software \cite{nauty2014}. The results (see Table~\ref{tab:1}) are validated by
  double-counting using the orbit-stabilizer theorem. 
  % The calculation took about one core-year on a 2GHz computer.
  
  \item[3.] (Section \ref{s:13}.) Reconstructing an equitable partition from a $(5,5)$-local partition.
  It follows from the definition of orthogonal arrays that 
  a complete equitable partition
  can be reconstructed in a unique way.
\end{itemize}

\begin{table}[ht]
\caption{The number of equivalence classes of $(r_0,r_1)$-local partitions classified by the type of 
the included $(2,2)$-local partition}
$$
\begin{array}{l|c|c|c|c|c|c}
\mbox{type} & (r_0,r_1)=(2,3) & (2,4) & (3,4) & (4,4) & (4,5) & (5,5) \\ \hline
4{+}3{+}3{+}3 & 266 & 33077 & 912 & 0 &  &    \\
3{+}4{+}3{+}3 & 475 & 97550 & 187335 & 0 &  &    \\
7{+}3{+}3 & 2315 & 861699 & 97841 & 0 &  &    \\
3{+}7{+}3 & 2540 & 839273 & 1198056 & 0 &  &    \\
6{+}4{+}3 & 3492 & 1362844 & 37234 & 0 &  &    \\
4{+}6{+}3 & 4134 & 748748 & 3724 & 0 &  &    \\
3{+}6{+}4 & 2404 & 861732 & 452111 & 0 &  &    \\
5{+}5{+}3 & 2611 & 1194122 & 69325 & 10 & 20 & 20   \\
3{+}5{+}5 & 1156 & 444846 & 330614 & 12 & 12 & 12   \\
10{+}3 & 25784 & 11598959 & 699031 & 14 & 20 & 20   \\
3{+}10 & 10579 & 4336586 & 3656845 & 19 & 15 & 12   \\
5{+}4{+}4 & 1397 & 565938 & 7864 & 0 &  &    \\
4{+}5{+}4 & 3785 & 701873 & 1192 & 0 &  &    \\
9{+}4 & 19809 & 9262166 & 186257 & 0 &  &    \\
4{+}9 & 15802 & 3240956 & 9203 & 0 &  &    \\
8{+}5 & 15149 & 7843990 & 229791 & 0 &  &    \\
5{+}8 & 9518 & 5006596 & 147247 & 0 &  &    \\
7{+}6 & 12777 & 6436913 & 185167 & 0 &  &    \\
6{+}7 & 10901 & 5446544 & 124577 & 0 &  &    \\
13 & 150346 & 77748861 & 2425510 & 0 &  &    \\ \hline
 \mbox{any} &  295240 & 138633273 & 10049836 & 55 & 67 & 64
\end{array}
$$
\label{tab:1}
\end{table}

%====================================================
\subsection[The (2,2)-local partitions]{The $(2,2)$-local partitions}\label{s:22}

A starting point of our classification 
is the $(2,2)$-local partitions, 
which can be classified manually.

\begin{lemma}
 There are exactly $20$ equivalence classes of $(2,2)$-local partitions.
\end{lemma}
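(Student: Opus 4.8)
The plan is to translate the four defining conditions of a $(2,2)$-local partition into combinatorial constraints on the weight-$2$ words, and then reduce the classification to counting $2$-regular graphs on $13$ vertices with one marked vertex. First I would unwind condition~(I): for $(r_0,r_1)=(2,2)$ it forces $P_+\cup P_-$ to be exactly the set of all words of $\Qq{13}$ of weight at most $2$, namely $\bar 0$, the $13$ words of weight $1$, and the $78$ words of weight $2$. Since $\bar 0$ lies in $P_+$ by~(II) and has weight $0<2$, condition~(IV) sends its whole neighborhood---the $13$ weight-$1$ words---into $P_-$. Writing $\bar e_i$ for the weight-$1$ word with its single $1$ in position $i$, each $\bar e_i$ also has weight $1<2$ and lies in $P_-$, so~(IV) demands exactly three of its neighbors in $P_+$. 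Its neighbors are $\bar 0$ (already in $P_+$) and the $12$ weight-$2$ words $\bar e_i+\bar e_j$, $j\ne i$; hence for every coordinate $i$ exactly two of the weight-$2$ words having a $1$ in position $i$ belong to $P_+$.

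Next I would check that condition~(III) is automatic here: among words of weight at most $2$ the only edges join $\bar 0$ to weight-$1$ words and weight-$1$ words to weight-$2$ words, and all weight-$1$ words lie in $P_-$, so $P_+=\{\bar 0\}\cup S$ (with $S$ the chosen set of weight-$2$ words) is independent no matter which $S$ we pick. Encoding each $\bar e_i+\bar e_j\in S$ as the edge $\{i,j\}$ of a graph $\Gamma$ on the vertex set $\{1,\dots,13\}$, the constraint from the previous step says exactly that $\Gamma$ is $2$-regular, i.e.\ a vertex-disjoint union of simple cycles of length at least $3$. This yields a bijection between $(2,2)$-local partitions and such graphs $\Gamma$.

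Finally I would count these graphs modulo the permitted equivalence, the action of the coordinate permutations fixing coordinate $1$, i.e.\ $S_{12}$ acting on $\{2,\dots,13\}$. Vertex $1$ lies in a unique cycle of $\Gamma$, and since such a permutation may rearrange all the other vertices freely, two graphs are equivalent precisely when the cycle through vertex $1$ has the same length and the remaining cycles carry the same multiset of lengths. Thus the classes correspond to pairs (length of the marked cycle, multiset of the other lengths), and for each cycle-type partition $\lambda$ of $13$ into parts $\ge 3$ the number of admissible markings equals the number of distinct part sizes of $\lambda$. Summing this over the ten such partitions of $13$ gives $20$, matching the twenty types listed in Table~\ref{tab:1}.

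The main obstacle---indeed the only nonroutine point---is the equivalence bookkeeping: one must use that the equivalence fixes coordinate $1$ rather than allowing the full symmetric group $S_{13}$, which would collapse the count to $10$. Tracking the marked cycle through vertex $1$ (equivalently, counting the distinct part sizes of each partition) is exactly what produces the splitting into twenty classes, and the explicit list of the ten partitions of $13$ into parts at least $3$ is the routine enumeration that completes the count.
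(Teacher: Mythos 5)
Your proof is correct and follows essentially the same route as the paper: you encode the weight-$2$ words of $P_+$ as a $2$-regular graph on the $13$ coordinates (the paper equivalently defines its graph $\Gamma_{13}$ on the weight-$1$ words, joining two when their common weight-$2$ neighbor lies in $P_+$) and then count $2$-factors of $K_{13}$ with a marked vertex. Your tally via the number of distinct part sizes in each of the ten partitions of $13$ into parts $\ge 3$ reproduces exactly the paper's list of $20$ dotted cycle structures, and you correctly identify the one non-routine point, namely that equivalence only permutes coordinates $2,\dots,13$.
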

\begin{proof}
Assume that $(P_+,P_-)$ is a $(2,2)$-local partition. By the definition,
$\bar 0 \in P_+$. Moreover, all $13$ weight-$1$ words belong to $P_-$.
Each of them has $3$ neighbors in $P_+$, by the definition of an equitable partition.
One of these $3$ neighbors is $\bar 0$,
while the other two have weight $2$. 
On the $13$ weight-$1$ words, we construct a graph $\Gamma_{13}$, 
two vertices being connected if and only if they have a common weight-$2$ neighbor in $P_+$.
We see that this graph is regular of degree $2$ (i.e., a $2$-factor, consisting of disjoint cycles), 
and it completely determines $P_+$ and hence $P_-$.
There are $10$ such graphs, up to isomorphism, with cycle structures
$4{+}3{+}3{+}3$, $7{+}3{+}3$, $6{+}4{+}3$, 
$5{+}5{+}3$, $10{+}3$, $5{+}4{+}4$, 
$9{+}4$, $8{+}5$, $7{+}6$, and $13$.
However, two isomorphic graphs correspond to inequivalent $(2,2)$-local partitions
if and only if the weight-$1$ word with $1$ in the first coordinate belongs to cycles of different length in these two graphs.
So, inequivalent $(2,2)$-local partitions correspond to non-isomorphic pairs
(a $2$-factor on $13$ vertices, a chosen vertex). 
There are exactly $20$ such  non-isomorphic pairs, with the cycle structures
$\dot3{+}4{+}3{+}3$, 
$\dot4{+}3{+}3{+}3$, 
$\dot3{+}7{+}3$, 
$\dot7{+}3{+}3$, 
$\dot3{+}6{+}4$, 
$\dot4{+}6{+}3$, 
$\dot6{+}4{+}3$,
$\dot3{+}5{+}3$, 
$\dot5{+}5{+}3$, 
$\dot3{+}10$, 
$\dot{10}{+}3$, 
$\dot4{+}5{+}4$,
$\dot5{+}4{+}4$,
$\dot4{+}9$, 
$\dot9{+}4$, 
$\dot5{+}8$, 
$\dot8{+}5$, 
$\dot6{+}7$, 
$\dot7{+}6$, 
and $\dot{13}$,
where the first (dotted) summand 
corresponds to the length of the cycle
that contains the chosen vertex.
\end{proof}

\subsection[From (2,2) to (2,3), (2,4), \ldots, (5,5)]{From $(2,2)$ to $(2,3)$, $(2,4)$, \ldots, $(5,5)$}\label{s:22-55}
We describe these steps by the example of the case $(2,3) \to (2,4)$,
as the other cases are completely similar and solved with the same
\texttt{c++} program with different parameters.

%====================================================
\subsubsection[Completing to (2,4)-local partition]{Completing to $(2,4)$-local partition}\label{s:23-24}

Denote by $W_{i}^{j}$ the set of words of weight $j$ that start with $i$.
As the result of the previous step,
we keep representatives of all the equivalence
classes of $(2,3)$-local partitions.
For each representative $(P_+,P_-)$, 
we need to find a subset $R$ of $W_{1}^{4}$ such that
$(P_+ \cup R,P_-\cup (W_{1}^{4} \backslash R))$ 
is a  $(2,4)$-local partition, i.e., satisfies (I)--(IV).

Conditions (I) and (II) are satisfied automatically. 
To satisfy condition (III), 
we remove from $W_{1}^{4}$ all the words that have a neighbor
from $P_+$. 
The set obtained, call it $U$, 
is the set of candidates 
for the role of elements of $R$. 
It remains to satisfy condition (IV) for all 
the vertices from $W_{1}^{3}\cap P_-$ 
(for the vertices from $P_+$, 
it is satisfied by (III);
for the vertices from $W_0^0$, 
$W_0^1$, $W_1^1$, and $W_1^2$, 
it is satisfied because of the $(2,3)$-local property).
For each vertex $\bar u$ from $W_{1}^{3}\cap P_-$, 
denote $\alpha(\bar u)=3-\beta(\bar u)$,
where $\beta(\bar u)$ is the number of neighbors of $\bar u$ in $P_+$.
By the definition of a $(2,4)$-local partition, 
$\bar u$ must have exactly 
$\alpha(\bar u)$ neighbors in $R$.
So, to meet (IV), we have to find a collection $R$
of elements from $U$ such that every element
$\bar u$ from $W_{1}^{3}\cap P_-$
belongs to exactly $\alpha(\bar u)$
neighbors of elements of $R$.
This is an instance of the problem known as \emph{exact covering}.
A convenient package to solve this problem
(with different multiplicities $\alpha(\bar u)$,
which is important in our case)
in \texttt{C} and \texttt{C++} programs is \texttt{libexact} \cite{KasPot08}.
After finding all the solutions $R$, 
we have all the $(2,4)$-local partitions
that include the given $(2,3)$-local partition $(P_+,P_-)$.

%====================================
\subsubsection{Isomorph rejection}\label{s:iso}
As we need to keep only inequivalent $(2,4)$-local partitions,
it is important to compare such partitions for equivalence.
It is done with the help of the well-known graph-isomorphism software \cite{nauty2014}.
The standard technique, described in \cite{KO:alg}, 
consists of constructing 
for each object
(in our case, a $(2,4)$-local partition)
a \emph{graph} such that two objects are equivalent 
if and only if the corresponding graphs are isomorphic.
Using the \texttt{nauty\&traces} package  \cite{nauty2014},
from each graph we can construct the \emph{canonical-labeling graph}
such that two graphs are isomorphic if and only if
the corresponding canonical-labeling graphs are equal.
Each time we find a new $(2,4)$-local partition,
we construct the canonical-labeling graph $G$ and check 
whether it is contained in our collection (of inequivalent $(2,4)$-local partitions
and the corresponding canonical-labeling graphs).
If not, we update the collection with the new representative
and the corresponding canonical-labeling graph $G$,
and set the value of a special variable $N(G)$,
the number of occurrences, 
equal to $1$
(the final value of $N(G)$
is utilized in the validation step,
see the next subsection).
If the graph $G$ is already in the collection, we only increase $N(G)$ by $1$.
When the search is finished, our collection contains representatives
of all the equivalence classes of $(2,4)$-local partitions.

%====================================
\subsubsection{Validation}\label{s:valid}

We can validate the results of the calculation by double-counting 
the size of each equivalence class found. Let $(P'_+,P'_-)$
be a $(2,4)$-local partition, and let it include a
$(2,3)$-local partition $(P_+,P_-)$.
On one hand, there are exactly
\begin{equation}
 \label{eq:N1}
 \frac{12!}{|\mathrm{Aut}(P'_+,P'_-)|} 
\end{equation}
$(2,4)$-local partitions equivalent to $(P'_+,P'_-)$,
where $\mathrm{Aut}(P'_+,P'_-)$ is the set of permutations of the last $12$ 
coordinates that stabilize $P'_+$ and $P'_-$ set-wise. 
On the other hand, this number equals
\begin{equation}
 \label{eq:N2}
 N(P'_+,P'_-)\cdot\frac{12!}{|\mathrm{Aut}(P_+,P_-)|},
\end{equation}
where $N(P'_+,P'_-)$ is the number of $(2,4)$-local partitions 
that are equivalent to $(P'_+,P'_-)$ and include the $(2,3)$-local partition $(P_+,P_-)$.
As our algorithm finds all $(2,4)$-local partitions 
that include a given $(2,3)$-local partition,
the number $N(P'_+,P'_-)$ for each found equivalence class
is computed during the isomorph rejection step and equals 
the final value of $N(G)$ for the corresponding graph $G$,
see the previous subsection.
If the number $N(P'_+,P'_-)$ is counted correctly, 
then we know that we did not miss any representative
of the equivalence class during the experiment.
So, calculating the values \eqref{eq:N1} and \eqref{eq:N2}
and comparing them for equality
prevents many kinds of random and systematical errors.
This strategy represents a special case of 
the general double-counting validation technique 
described in~\cite[10.2]{KasPot08}.
Note that $|\mathrm{Aut}(P'_+,P'_-)|$
coincides with the order of the automorphism group of the corresponding 
characteristic graph; it is computed by  \texttt{nauty\&traces}
as a part of finding the canonical-labeling graph.

%====================================================
\subsection{Completing to an equitable partition}\label{s:13}

Completing a $(5,5)$-local partition
$(P_+,P_-)$ to an equitable partition 
$(C,\overline C)$
of $\Qq{13}$ is the easiest step,
and the result is always unique
(however, 
the fact that it always exists is still only empiric).
We know that $P_+$ consists of all the vertices
of the orthogonal array $C$ of weight $5$ or less.
Every subgraph of $\Qq{13}$ isomorphic to $\Qq{6}$ contains
exactly $\lambda=12$ vertices of $C$.
For every vertex $\bar u$ of weight $6$,
there is such subgraph that contains $\bar u$ and $2^6-1$ vertices
of smaller weight. Counting the number of vertices of $P_+$
among them, we can determine whether $\bar u$ belongs to $C$
or not. After finding, in this way, all weight-$6$ elements of
$C$, we can repeat the similar procedure for the weight $7$, then $8$,
$9$, $10$, $11$, $12$, and $13$.

%====================================================
%====================================================
%====================================================
\section{Results of the classification}\label{s:res}
\begin{theorem}\label{th:13}
There is only one orthogonal array OA$(1536,13,2,7)$, up to equivalence.
Its automorphism group has order $480$; 
the orthogonal array is partitioned into orbits of sizes 
$240$, $240$, $240$, $240$, $240$, $240$, $48$, $48$, 
and the complement is partitioned into $2$ orbits of size $48$,
$4$ orbits of size $80$, $18$ orbits of size $240$, and $4$ orbits of size $480$.
The kernel has size $4$ and contains words of weight $0$, $6$, $7$, and $13$.
\end{theorem}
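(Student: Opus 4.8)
The plan is to reduce the classification of the arrays OA$(1536,13,2,7)$ to the classification of the corresponding equitable $2$-partitions, and then to exhaust these by the local-to-global reconstruction developed in Section~\ref{s:class}. Since these parameters attain the Bierbrauer--Friedman bound, Proposition~\ref{p:oa-equit} tells us that any such array $C$ gives an equitable partition $(C,\overline C)$ of $\Qq{13}$ with quotient matrix $[[0,13],[3,10]]$. Translating by an element of $C$ (a cube automorphism), I may assume $\bar 0\in C$, so that classifying the arrays up to equivalence amounts to classifying these partitions with $\bar 0\in C$ up to $\mathrm{Aut}(\Qq{13})$. Every such partition restricts, on the low-weight words, to a $(5,5)$-local partition in the sense of Section~\ref{s:class}, with $P_+^{(5,5)}\subset C$ and $P_-^{(5,5)}\subset\overline C$; the key design choice, which makes the enumeration feasible, is that the local partition is organized around $\bar 0$ and treats the first coordinate separately, with its own radius.

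The computational core is to enumerate all inequivalent $(5,5)$-local partitions. First I would classify the $(2,2)$-local partitions by hand, obtaining the $20$ classes indexed by a $2$-factor on $13$ vertices together with a distinguished vertex (the first coordinate). Then I would iterate the completion step of Section~\ref{s:23-24}: given the representatives of the $(r_0,r_1-1)$- or $(r_0-1,r_1)$-local partitions, each admissible extension to the next weight shell is found by solving an exact-cover instance with multiplicities $\alpha(\bar u)$ via \texttt{libexact}, and inequivalent solutions are kept using canonical forms from \texttt{nauty\&traces}, where equivalence is permutation of the last $12$ coordinates. Running the chain $(2,2)\to(2,3)\to\cdots\to(5,5)$ produces the counts of Table~\ref{tab:1} and terminates with exactly $64$ equivalence classes of $(5,5)$-local partitions.

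Next I would complete each of these $64$ local partitions to a full equitable partition. As explained in Section~\ref{s:13}, once $P_+^{(5,5)}$ fixes all words of $C$ of weight at most $5$, the orthogonal-array property (every $\Qq{6}$-subcube meets $C$ in exactly $\lambda=12$ words) forces membership of each weight-$6$ word, then each weight-$7$ word, and so on shell by shell; hence the completion, if it exists, is unique. In the experiment every class completes successfully, which gives the existence empirically. Finally, I would test the $64$ resulting complete partitions for equivalence under the full group $\mathrm{Aut}(\Qq{13})$, now allowing the first coordinate to move, again via canonical forms, and verify that they all collapse to a single class; this yields uniqueness. For the structural data, I would compute $\mathrm{Aut}(C)$ of the surviving representative directly from its canonical-labelling graph, read off the order $480$, decompose $C$ and $\overline C$ into orbits under this group, and compute the kernel $\{\bar k:\bar k+C=C\}$ to obtain the stated weights $0,6,7,13$.

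The main obstacle is neither conceptual nor algebraic but the scale and reliability of the search: the intermediate stage of $(2,4)$-local partitions already carries more than $1.3\times 10^{8}$ equivalence classes, so the enumeration must be both efficient and provably exhaustive. To guard against missed classes and implementation errors I would, as in Section~\ref{s:valid}, validate every stage by the orbit--stabilizer double count, comparing the size $12!/|\mathrm{Aut}(P'_+,P'_-)|$ of each class against $N(P'_+,P'_-)\cdot 12!/|\mathrm{Aut}(P_+,P_-)|$ aggregated over its predecessors; agreement certifies completeness at that step. A secondary subtlety is that the shell-by-shell completion is forced to be unique by the OA axioms, whereas its \emph{existence} is only observed in the computation, so the argument establishes uniqueness unconditionally while existence rests on the successful completion of the single recorded representative.
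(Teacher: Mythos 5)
Your proposal is correct and follows essentially the same route as the paper: reduction to equitable $2$-partitions with quotient matrix $[[0,13],[3,10]]$ via Proposition~\ref{p:oa-equit}, manual classification of the $20$ classes of $(2,2)$-local partitions, the chain of exact-cover completions with isomorph rejection through $(5,5)$, unique shell-by-shell completion forced by the index-$12$ subcube condition, final equivalence testing under the full group, and orbit--stabilizer double-counting for validation. You also correctly isolate the two delicate points the paper itself flags --- the separate treatment of the first coordinate and the fact that existence of the completion at the last step is only empirical (though existence of the array itself is independently guaranteed by the Fon-Der-Flaass construction of Proposition~\ref{p:FDF13}).
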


By Proposition~\ref{p:short}, 
every OA$(768,12,2,6)$ can be obtained
by shortening some 
OA$(1536,\linebreak[3]13,\linebreak[3]2,\linebreak[3]7)$.
Since the OA$(1536,13,2,7)$ is unique up to equivalence,
shortening it in different positions we get all the OA$(768,12,2,6)$,
also up to equivalence.
Under the action of the automorphism group of the OA$(1536,13,2,7)$,
the positions are divided into three orbits by $1$, $6$, and $6$, 
corresponding
to the three equivalence classes of OA$(768,12,2,6)$.
We should also note that the OA$(1536,13,2,7)$ is invariant
under translation by $\bar 1$ 
(see Proposition~\ref{p:short} or the claim of Theorem~\ref{th:13} about the kernel); 
so, the results of $0$-shortening and $1$-shortening in the same position are equivalent.

\begin{theorem}\label{th:12}
There are three orthogonal arrays OA$(768,12,2,6)$, up to equivalence.
One of them has the automorphism group of order $240$, 
with orbit sizes $120$, $120$, $120$, $120$, $120$, $120$, $24$, $24$.
Each of the other two arrays has the automorphism group of order $40$;
two orbits of size $4$, $14$ orbits of size $20$, and $12$ orbits of size $40$.
\end{theorem}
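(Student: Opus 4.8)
The plan is to derive Theorem~\ref{th:12} directly from the uniqueness result in Theorem~\ref{th:13}, exploiting Proposition~\ref{p:short} together with the orbit structure of the automorphism group of the unique OA$(1536,13,2,7)$. By Proposition~\ref{p:short}, since the strength $t=6$ is even, every OA$(768,12,2,6)$ arises by shortening some OA$(1536,13,2,7)$ in one of its $13$ coordinate positions. As the OA$(1536,13,2,7)$ is unique up to equivalence (Theorem~\ref{th:13}), it suffices to shorten this single array in every position and then sort the resulting arrays into equivalence classes. Two shortenings in positions $i$ and $j$ yield equivalent OA$(768,12,2,6)$ whenever some automorphism of the OA$(1536,13,2,7)$ maps position $i$ to position $j$; hence the number of inequivalent shortenings is bounded above by the number of orbits of $\mathrm{Aut}(C)$ on the $13$ coordinate positions.

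First I would compute the action of $\mathrm{Aut}(C)$, of order $480$, on the set of $13$ coordinates, using the explicit description of the unique array and its automorphism group already obtained in the proof of Theorem~\ref{th:13}. The claim to verify is that this action has exactly three orbits, of sizes $1$, $6$, and $6$. The orbit of size $1$ corresponds to a distinguished coordinate (consistent with the kernel structure and the $\bar 1$-invariance noted after Theorem~\ref{th:13}), giving a shortened array with a larger symmetry group, while the two orbits of size $6$ give the remaining two classes. Since shortening in positions lying in the same orbit produces equivalent arrays, this immediately yields at most three equivalence classes of OA$(768,12,2,6)$.

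Next I would confirm that the three classes are genuinely distinct and compute their automorphism-group orders and orbit structures as claimed. The cleanest invariant for distinguishing them is the order of the automorphism group: one array has $|\mathrm{Aut}|=240$ and the other two have $|\mathrm{Aut}|=40$. For the distinguished (size-$1$ orbit) coordinate, the stabilizer argument gives a group of order $480/|\text{point stabilizer action}|$, and one expects the larger value $240$ here; for the size-$6$ orbits, the orbit--stabilizer theorem gives point-stabilizer order $480/6=80$ in $\mathrm{Aut}(C)$, but the automorphism group of the shortened array may properly contain the image of this stabilizer, so the value $40$ must be read off directly from the computed shortened arrays rather than inferred purely from $\mathrm{Aut}(C)$. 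The orbit-size data ($120$'s and $24$'s in the first case, $4$'s, $20$'s, and $40$'s in the others) would be obtained by explicitly computing each shortened array and running the same isomorphism and automorphism-group machinery (\texttt{nauty\&traces}, as in Section~\ref{s:iso}) already used throughout the classification.

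The main obstacle is not the existence of at most three classes, which follows formally from the orbit count, but rather establishing that the two arrays with $|\mathrm{Aut}|=40$ are inequivalent to each other, since they share the same group order. Equal automorphism-group order does not imply equivalence, so a finer invariant, or a direct canonical-form comparison, is required to separate them. I would settle this by computing canonical-labeling graphs for both arrays as in Section~\ref{s:iso} and verifying they differ; alternatively, their differing orbit structures on the array itself (the detailed distribution of orbit sizes) or weight-distribution-type invariants of the associated equitable $3$-partition from Theorem~\ref{th:3part} could serve to distinguish them. The bulk of the remaining verification is a routine but machine-assisted computation: shorten the unique array in one representative of each coordinate orbit, compute the automorphism group and orbit decomposition of each result, and confirm the three tuples of invariants are pairwise distinct.
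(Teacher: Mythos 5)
Your proposal follows essentially the same route as the paper: reduce to shortening the unique OA$(1536,13,2,7)$ via Proposition~\ref{p:short}, observe that the $13$ coordinate positions fall into three orbits of sizes $1$, $6$, $6$ under its automorphism group of order $480$, and then verify the distinctness of the three resulting arrays and their automorphism-group and orbit data computationally. The one detail worth making explicit is that shortening is an $a$-shortening for $a\in\{0,1\}$, so a priori there are $26$ cases rather than $13$; the paper collapses the two values at each position using the invariance of the OA$(1536,13,2,7)$ under translation by $\bar 1$ (the weight-$13$ kernel element), a fact you cite but do not explicitly invoke for this purpose.
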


%====================================================
%====================================================
%====================================================
\section[Representations of OA(1536,13,2,7)]{Representations of OA$(1536,13,2,7)$}\label{s:constr}
\subsection{The Fon-Der-Flaass construction}\label{s:FDF}

The following is a special case
of a construction 
% of equitable $2$-partitions of $\Qq{n}$ 
from \cite{FDF:PerfCol}. 
The construction starts 
from an equitable partition  $(C_{6},\overline C_{6})$
with quotient matrix $[[1,5],[3,3]]$.
The cell $C_{6}$
is partitioned into edges;
we use 
the notation $i(\bar c)$ 
to indicate the direction 
of the edge that contains
a vertex $\bar c$ of $C_{6}$.
To be explicit, we list all words $\bar c$ of the cell $C_6$
(which is known as OA$(24,6,2,3)$ \cite{Tarannikov2000}):
\begin{eqnarray} \label{eq:C6}
\begin{array}{r@{\ }r@{\quad}r@{\ }r@{\quad}l} 
\mathbf000000,& \mathbf100000, &   \mathbf111111,& \mathbf011111,  & i(\bar c)=1, \\
0\mathbf00110,& 0\mathbf10110, &   1\mathbf11001,& 1\mathbf01001,  & i(\bar c)=2, \\
00\mathbf0011,& 00\mathbf1011, &   11\mathbf1100,& 11\mathbf0100,  & i(\bar c)=3, \\
010\mathbf001,& 010\mathbf101, &   101\mathbf110,& 101\mathbf010,  & i(\bar c)=4, \\
0110\mathbf00,& 0110\mathbf10, &   1001\mathbf11,& 1001\mathbf01,  & i(\bar c)=5, \\
00110\mathbf0,& 00110\mathbf1, &   11001\mathbf1,& 11001\mathbf0,  & i(\bar c)=6. 
\end{array}
\end{eqnarray}
% The set $C_6$ is an OA$(24,6,2,3)$ %, found in 
% \cite{Tarannikov2000};
% %as a correlation-immune function of order $3$; 
% the partition $(C_6, \overline C_6)$
% is equitable  with quotient matrix $[[1,5],[3,3]]$.

\begin{pro}[a special case of {\cite[Proposition~2]{FDF:PerfCol}}]\label{p:FDF13}
%Define
The partition $(C_{13},\overline C_{13})$, where
\begin{equation}\label{eq:C13}
 C_{13} = \{ (\bar b|\bar b+\bar c| b_1+b_2+b_3+b_4+b_5+b_6+b_{i(\bar c)}+c_{i(\bar c)}) \, : \,  
\bar b = (b_1...b_6)\in \{0,1\}^6, \ 
\bar c= (c_1...c_6)\in C_6
\},
\end{equation} 
is equitable
with quotient matrix $[[0,13],[3,10]]$,
and $C_{13}$ is an OA$(1536,13,2,7)$.
\end{pro}

% where for every $\bar c$ from $C_6$, its only neighbor from $C_6$ differs with $\bar c$
% in the $i(\bar c)$-th coordinate (see \eqref{eq:C6}).

\begin{remark}
The Fon-Der-Flaass construction \cite{FDF:PerfCol} admits the possibility
of \emph{switching} the resulting equitable partition. 
In our  case, %the code $C_6$ is partitioned into $12$ edges. 
we can choose an edge $\{\bar c',\bar c''\}$ in $C_6$, 
and change the value of the last coordinate
for the $2^{7}$ vertices of $C_{13}$ 
corresponding in \eqref{eq:C13} to $\bar c \in \{\bar c',\bar c''\}$.
This  operation, \emph{switching}, 
results in an equitable partition with the same quotient matrix.
Since this can be done with each of the $12$ edges,
switching gives $2^{12}$ different equitable partitions.
By Theorem~\ref{th:13},
all these partitions are equivalent in the considered special case,
which can be considered as a surprising result of the classification.
In general, the construction in combination with switching
gives a huge number of inequivalent equitable partitions of $Q_n$ 
as $n$ grows \cite{VorFDF}. 
\end{remark}

%====================================================
\subsection{The Fourier transform}\label{s:Fr}

The Fourier transform of a real-valued (or complex-valued) function $f$
on $\{0,1\}^n$ is the collection of the coefficients  $\hat f(\bar y)$,
$\bar y \in \{0,1\}^{n}$,
in the expansion
$$
f(\bar x)
= \sum_{\bar y \in \{0,1\}^{n}}
\hat f(\bar y)
(-1)^{\langle \bar y, \bar x \rangle}
$$
of $f$ in terms of the orthogonal basis from the \emph{characters}
$\psi_{\bar y}(\bar x)=(-1)^{\langle \bar y, \bar x \rangle}$,
where $\langle (y_1,...,y_n),(x_1,...,x_n) \rangle=y_1x_1+\ldots+y_nx_n$.
The Fourier transform, whose variants are also known as the Walsh--Hadamard transform and 
the MacWilliams transform, 
is an important representation of a function or a set of vertices
in $\{0,1\}^n$. In particular, it is well known and straightforward that a multiset of vertices in $\{0,1\}^n$
is an OA$(N,n,2,t)$ 
if and only if the Fourier transform $\hat f$ of its multiplicity function satisfies 
$\hat f(\bar 0)=N/2^n$ and $\hat f(\bar y)=0$ for all $\bar y$ of weight $1$, $2$, \ldots, $t$.
On the other hand, a set of vertices of $\Qq{n}$  is a cell of an equitable $2$-partition of $\Qq{n}$
if and only if the nonzero ($\bar y\ne \bar 0$) nonzeros ($\hat f(\bar y)\ne 0$) 
of the Fourier transform $\hat f$ of its characteristic function
have the same weight.
The Fourier transform of $C_{13}$ was found computationally.
It can be seen from the construction in the previous subsection
that the two-cycle coordinate permutation $(2\ 3\ 4\ 5\ 6)(8\ 9\ 10\ 11\ 12)$ is an automorphism
or $C_{13}$; it follows that the Fourier transform is also invariant under this  coordinate permutation.

\begin{theorem} The Fourier decomposition
$$\chi_{C_{13}}(\bar x) = \sum_{\bar y \in \{0,1\}^{13}}
\phi(\bar y)
(-1)^{\langle \bar y, \bar x \rangle}$$
of the characteristic $\{0,1\}$-function of the orthogonal array $C_{13}$ 
defined in \eqref{eq:C13}
 has $1+111$ nonzero coefficients $\phi(\bar y)$.
 The collection of  coefficients $\phi(\bar y)$ 
 is invariant under the coordinate permutation
 $\pi=(2\ 3\ 4\ 5\ 6)(8\ 9\ 10\ 11\ 12)$. Below is the list of representatives
 $\bar y$ under $\pi$
 corresponding to the nonzero values of $\phi(\bar y)$.
 $$
 \begin{array}{r|l}
 \mbox{value of $\phi$} & \mbox{representatives under $\pi=(2\ 3\ 4\ 5\ 6)(8\ 9\ 10\ 11\ 12)$} \\ \hline
\displaystyle 3/16 & 
0\,00000|0\,00000|0
 \\[2pt]\hline
\displaystyle -1/16 & 
1\,01011|1\,01011|0
 \\[2pt]\hline
\displaystyle 1/16 & 
1\,00111|1\,00111|0,\  
0\,01111|0\,01111|0
 \\[2pt]\hline
\displaystyle -1/32 &
 0\,10100|1\,01111|1, \  
 0\,10010|1\,01111|1, \  
 0\,01111|1\,10100|1, \  
 0\,01111|1\,10010|1, \\  &
 1\,00111|0\,11100|1, \  
 1\,11100|0\,00111|1, \  
 1\,01011|0\,10101|1, \  
 1\,10101|0\,01011|1
 \\[2pt]\hline
 \displaystyle 1/32 &
1\,11111 | 1\,00000 | 1, \
1\,01110 | 1\,10001 | 1, \
1\,10101 | 1\,01010 | 1, \
1\,00100 | 1\,11011 | 1, \\  &
1\,00100 | 0\,11111 | 1, \
1\,11111 | 0\,00100 | 1, \
1\,10101 | 0\,01110 | 1, \
1\,01110 | 0\,10101 | 1, \\  &
0\,11110 | 1\,10001 | 1, \
0\,01111 | 1\,10001 | 1, \
0\,11000 | 1\,01111 | 1, \
0\,00011 | 1\,11110 | 1 \
 \end{array}
 $$
\end{theorem}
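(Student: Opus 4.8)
The statement really has two independent halves --- the $\pi$-invariance of the collection of coefficients, and the determination of their support and values --- and I would treat them separately. The invariance is the soft part. From \eqref{eq:C13} one checks that applying $\pi=(2\,3\,4\,5\,6)(8\,9\,10\,11\,12)$ to a word of $C_{13}$ amounts to cyclically permuting the last five entries of $\bar b$ and of $\bar b+\bar c$ at once, which is induced by the corresponding cyclic permutation of $\bar c$ inside $C_6$; since the latter is an automorphism of $C_6$ (visible in \eqref{eq:C6}), $\pi$ stabilizes $C_{13}$. For any coordinate permutation $\sigma$ one has $\langle\sigma\bar y,\bar x\rangle=\langle\bar y,\sigma^{-1}\bar x\rangle$, so if $\sigma$ stabilizes $C_{13}$ then $\phi(\sigma\bar y)=\phi(\bar y)$ for every $\bar y$; taking $\sigma=\pi$ gives the invariance and lets me work with one representative per $\pi$-orbit.

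For the support I would use the equitable structure. By Proposition~\ref{p:FDF13} the pair $(C_{13},\overline{C_{13}})$ is an equitable partition with quotient matrix $[[0,13],[3,10]]$, whose eigenvalues are $13$ and $-3$. Hence, by Proposition~\ref{p:equit-oa} and the character description in Section~\ref{s:Fr}, $\phi(\bar 0)=N/2^{13}=3/16$ and $\phi(\bar y)=0$ unless $\bar y=\bar 0$ or $13-2\,\wt(\bar y)=-3$, i.e.\ $\wt(\bar y)=8$. This already confines every nonzero coefficient to weight $8$. To get the values, write $\bar y=(\bar u\,|\,\bar v\,|\,w)$ with $\bar u,\bar v\in\{0,1\}^6$, $w\in\{0,1\}$, substitute \eqref{eq:C13} into $\phi(\bar y)=2^{-13}\sum_{\bar x\in C_{13}}(-1)^{\langle\bar y,\bar x\rangle}$, and perform the inner summation over $\bar b$ first. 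That inner sum is a character sum over the whole $6$-cube, so it vanishes unless the coefficient vector of $\bar b$ is zero, which yields the condition $\bar u+\bar v=w(\bar 1+\bar e_i)$, where $i=i(\bar c)$ and $\bar e_i$ is the word with a single $1$, in position~$i$; when it holds the inner sum contributes $2^6$.

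The computation then splits on $w$. For $w=0$ the condition is simply $\bar u=\bar v$, the direction drops out, and one obtains $\phi\big((\bar v\,|\,\bar v\,|\,0)\big)=\tfrac12\,\widehat{\chi_{C_6}}(\bar v)$; so the $w=0$ part is exactly half the known Fourier transform of the OA$(24,6,2,3)$ cell $C_6$, supported on $\bar 0$ and the weight-$4$ words. For $w=1$ the condition forces $\wt(\bar u+\bar v)=5$, with $i$ the unique coordinate where $\bar u$ and $\bar v$ agree, and collapses $\phi$ to a sum over the four words of $C_6$ of direction $i$. Grouping these four words into the two direction-$i$ edges of $C_6$ and using that edge-partners differ only in position $i$, each edge contributes $0$ when $v_i=0$ and $\pm2$ when $v_i=1$, so that $\phi\in\{0,\pm\tfrac1{32}\}$; this at once accounts for the value $\pm1/32$ and for the absence of any $\pm1/64$.

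It then remains to run over the weight-$8$ words up to $\pi$, reading off the weight-$4$ transform of $C_6$ in the $w=0$ cases and the two-edge sign pattern in the $w=1$ cases, and to tally the orbit sizes: every surviving $\pi$-orbit has size $5$, the only exceptions being the two $\pi$-fixed words $\bar 0$ (weight $0$) and $(\bar 1\,|\,\bar e_1\,|\,1)$ (weight $8$), so summing gives $2+22\cdot5=112=1+111$ nonzero coefficients as stated. The main obstacle is precisely this $w=1$ enumeration: one must verify \emph{completeness}, namely that the two direction-$i$ edges of $C_6$ give matching signs exactly for the tabulated words and cancel for every other weight-$8$ word with $\wt(\bar u+\bar v)=5$. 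This is where the detailed combinatorics of $C_6$ enters and a finite case check --- tamed, but not eliminated, by the $\pi$-symmetry and the edge-cancellation reduction above --- seems unavoidable; everything else is forced by the eigenvalue constraint together with the inner $\bar b$-summation.
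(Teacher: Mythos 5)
Your derivation is correct, but it is a genuinely different route from the paper's: the paper does not prove this theorem analytically at all --- it states just before the theorem that ``the Fourier transform of $C_{13}$ was found computationally,'' and the only non-computational ingredient it offers is the observation (which you also make) that $\pi$ is an automorphism of $C_{13}$ induced by a cyclic automorphism of $C_6$, forcing $\pi$-invariance of $\phi$. What you do differently is to exploit the explicit form of \eqref{eq:C13}: the eigenvalue argument ($13-2\,\wt(\bar y)=-3$ for the quotient matrix $[[0,13],[3,10]]$) pins the support to $\{\bar 0\}\cup\{\wt(\bar y)=8\}$, and summing over $\bar b$ first collapses the $2^{13}$-point transform to the condition $\bar u+\bar v=w(\bar 1+\bar e_{i(\bar c)})$, giving $\phi(\bar v|\bar v|0)=\tfrac12\widehat{\chi_{C_6}}(\bar v)$ for $w=0$ and, for $w=1$, a signed sum over the two direction-$i$ edges of $C_6$ (where the weight-$8$ constraint forces $v_i=1$, so each edge contributes $\pm2$ and $\phi\in\{0,\pm1/32\}$). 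I spot-checked your formulas against the table (e.g.\ $1\,11111|1\,00000|1$ gives $i=1$, $\bar v+\bar e_1=\bar 0$, hence $4/128=1/32$; and $0\,10100|1\,01111|1$ gives $-4/128=-1/32$) and they reproduce the stated values, and your orbit count $2\cdot 1+22\cdot 5=112$ matches $1+111$. What your approach buys is an explanation of \emph{why} the values are $3/16$, $\pm1/16$, $\pm1/32$ and why the support decomposes as $\bar 0$, the $15$ words $(\bar u|\bar u|0)$ with $\wt(\bar u)=4$, and $96$ weight-$8$ words ending in $1$ --- structure the paper only records as an empirical remark after the theorem; what it costs is that the final identification of signs still reduces to a finite check over the $12$ edges of $C_6$ and the weight-$4$ transform of $C_6$, which you correctly acknowledge rather than hide. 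That residual check is small and entirely mechanical, so your argument is a legitimate (and more informative) proof, just not the one the paper uses.
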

It can be noted that all $\bar y$ with $\phi = \pm 1/16$
are all the $15$ words of form $(\bar u|\bar u|0)$, where $\bar u\in\{0,1\}^6$ and $\wt(\bar u)=4$. 
Further, all $\bar y$ with $\phi = \pm 1/32$
are all the $96$ weight-$8$ words of form $(\bar u|\bar w|1)$, where $\bar u,\bar w\in\{0,1\}^6$,
$\wt(\bar u)$ is even, and the positions of zeros in $\bar u$ and $\bar w$ are disjoint.

%====================================================
%====================================================
%====================================================
\section[On OA(2048,14,2,7) and similar parameters]{On OA$(2048,14,2,7)$ and similar parameters}\label{s:14}
In this section, we construct  
orthogonal arrays OA$(2^{2^m-m-1},2^m-2,2,2^{m-1}-1)$
that cannot be extended to $1$-perfect codes of length $2^m-1$.
In particular, this
means that the characterization of the orthogonal arrays 
OA$(2048,14,2,7)$ cannot be done by characterizing 
only punctured $1$-perfect codes in $\Qq{14}$.

\begin{definition}[$1$-perfect codes and related concepts]
A set $C$ of vertices of $H(n,q)$ is called an 
\emph{$l$-fold $1$-perfect code} 
(in the case $l=1$, simply a \emph{$1$-perfect code})
if $(C,\overline C)$ is an equitable partition with quotient matrix
$$
\left(
\begin{array}{cc}
l{-}1 & n(q{-}1){-}l{+}1 \\ 
l & n(q{-}1){-}l
\end{array}
\right)
\qquad
\mbox{(in particular,} \quad 
\left(
\begin{array}{cc}
0 & n(q{-}1)\\ 
1 & n(q{-}1){-}1
\end{array}
\right) \mbox{if $l=1$)},
$$
that is, if every radius-$1$ ball in $H(n,q)$ contains exactly $l$ words of $C$.
Obviously, the union of disjoint $l$- and $l'$-fold $1$-perfect codes in $H(n,q)$
is an $(l+l')$-fold $1$-perfect code. A $2$-fold $1$-perfect code is called
\emph{splittable} (\emph{unsplittable}) if it can (respectively, cannot) be represented as the union of two $1$-perfect codes. 
A set $C$ of vertices in $\Qq{n}$ 
is called a \emph{punctured $1$-perfect code} if 
$C= C' \cup C''$ where $C'|0 \cup C''|1$ is a $1$-perfect code.
\end{definition}

 \begin{theorem}\label{th:14} For every $m\ge 4$, there is an orthogonal array 
 OA$(2^{2^m-m-1},2^m-2,2,2^{m-1}-1)$
 that is not a punctured $1$-perfect code.
 \end{theorem}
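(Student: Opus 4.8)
The plan is to reduce the statement to a purely graph-theoretic property of the orthogonal array, and then to realize that property by an explicit construction.

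First I would fix notation: write $n=2^m-2$, $t=2^{m-1}-1$, $N=2^{2^m-m-1}$, and record that these parameters meet bound \eqref{eq:bier} with equality, since $n/2(t+1)=(2^m-2)/2^m=1-2^{1-m}$, so $2^n(1-n/2(t+1))=2^{n+1-m}=N$. Hence, by Proposition~\ref{p:oa-equit}, any such $C$ is an independent set and $(C,\overline C)$ is an equitable partition with quotient matrix $[[0,2^m-2],[2,2^m-4]]$; in particular every vertex outside $C$ has exactly two neighbors in $C$, and $|C|=2^{(2^m-1)-m}$ equals the size of a $1$-perfect code in $\Qq{2^m-1}$. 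The central object is the graph $G_2(C)$ on the vertex set $C$, with an edge joining two words at Hamming distance $2$. Since each distance-$2$ pair has exactly two common neighbors, both necessarily outside $C$, pairing up the $2^m-2$ neighbors of a fixed word shows that $G_2(C)$ is regular of degree $(2^m-2)/2=2^{m-1}-1$.

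The key lemma I would prove is: $C$ is a punctured $1$-perfect code if and only if $G_2(C)$ is bipartite. Indeed, appending a last coordinate $\varepsilon(\bar c)\in\{0,1\}$ to each $\bar c\in C$ produces a set $D=\{\bar c\,|\,\varepsilon(\bar c)\}$ of size $N$ in $\Qq{2^m-1}$; as $C$ is independent, two words of $D$ arising from a distance-$2$ pair of $C$ sit at distance $3$ exactly when their appended bits differ, whereas all remaining pairs are already at distance at least $3$. Thus an admissible $\varepsilon$ exists precisely when it properly $2$-colors $G_2(C)$, i.e.\ precisely when $G_2(C)$ is bipartite; and when it does, $D$ has minimum distance $\ge 3$ and cardinality $N$ with $N\cdot 2^m=2^{2^m-1}$, so $D$ is $1$-perfect by sphere-packing and $C$ is its puncture. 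The converse simply reads the two sides of the bipartition off the appended bit. Consequently the theorem is equivalent to producing, for every $m\ge 4$, an orthogonal array on bound \eqref{eq:bier} whose distance-$2$ graph contains an odd cycle. Equivalently, one checks directly that $E=C|0\cup C|1$ is a $2$-fold $1$-perfect code in $\Qq{2^m-1}$ invariant under flipping the last coordinate, and that $C$ is a punctured $1$-perfect code exactly when $E$ splits as a $1$-perfect code together with its last-coordinate flip; so it suffices to build $C$ for which $E$ is unsplittable.

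To build such a $C$ I would begin from a punctured Hamming code $C_0$, whose $G_2(C_0)$ is bipartite with the two color classes recorded by the deleted coordinate, and then apply a switching in the spirit of the Remark following Proposition~\ref{p:FDF13}: replace $C_0$ on a carefully chosen coordinate-invariant region by a translate, arranged so that the Fourier spectrum of the characteristic function stays supported on weights $0$ and $2^{m-1}$. By the criterion of Section~\ref{s:Fr} this keeps $C$ an equitable $2$-partition cell with quotient matrix $[[0,2^m-2],[2,2^m-4]]$, hence an OA on the bound, while a single distance-$2$ adjacency is altered so as to convert an even cycle of $G_2(C_0)$ into an odd one. Tracking only the switched region, I would then exhibit a concrete short odd cycle, ideally a triangle or a pentagon whose defining distance-$2$ pairs all survive in $C$, certifying that $G_2(C)$ is not bipartite.

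I expect this construction and its verification to be the main obstacle, for two reasons. The delicate point is to guarantee, uniformly in $m$, that the switched set is still an \emph{exact} equitable partition with the prescribed quotient matrix: the switching must preserve the vanishing of every Fourier coefficient of weight $1,\dots,2^{m-1}-1$ and also of weight exceeding $2^{m-1}$, and this is precisely where a naive local modification tends to fail. The second difficulty is to produce a \emph{provable} odd cycle in $G_2(C)$ valid for all $m\ge 4$, rather than merely checking non-bipartiteness by computer as is available for $m=4$ via the count in Table~\ref{t:1}; this will likely require describing the switched region explicitly enough that the relevant distance-$2$ adjacencies, and the impossibility of consistently $2$-coloring them around the cycle, can be read off directly from the construction.
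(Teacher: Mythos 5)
Your reduction is correct and essentially coincides with the paper's framing of the problem. The verification that the parameters meet bound \eqref{eq:bier}, the observation that $C$ is independent with quotient matrix $[[0,2^m-2],[2,2^m-4]]$, and the key lemma that $C$ is a punctured $1$-perfect code if and only if the distance-$2$ graph $G_2(C)$ is bipartite are all sound; the last point is exactly equivalent to the paper's criterion that $C|0\cup C|1$ is a $2$-fold $1$-perfect code (invariant under flipping the last coordinate) which is splittable precisely when $C$ is a punctured $1$-perfect code. So far you have faithfully reformulated the theorem.

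The genuine gap is that the existence half of the theorem --- producing, for every $m\ge 4$, an OA on the bound with non-bipartite $G_2(C)$ --- is not actually established. Your proposed route (start from a punctured Hamming code and perform a ``switching'' that keeps the Fourier spectrum supported on weights $0$ and $2^{m-1}$ while creating an odd cycle) is only a programme, and you yourself identify the two places where it is likely to fail: preserving the exact equitable-partition property under a local modification, and certifying an odd cycle uniformly in $m$. No candidate switching region is specified, so nothing is proved. The paper avoids this entirely by a different, fully explicit construction: it builds an unsplittable $2$-fold MDS code $M_k\subset\{0,1,2,3\}^k$ with a pairing symmetry in the last coordinate, feeds it into a Phelps-type product construction to obtain an unsplittable $2$-fold $1$-perfect code $C_{2^m-1}$ satisfying $\bar x|0\in C_{2^m-1}\Leftrightarrow \bar x|1\in C_{2^m-1}$, and then halves it; unsplittability is certified by an explicit $7$-cycle inherited from $M_k$, valid for all $m\ge4$. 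To complete your argument you would either have to carry out the switching analysis in full (which is the hard part you deferred) or import a construction of this kind; as written, the theorem's existence claim remains unproven.
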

 \begin{proof}
Unsplittable $2$-fold $1$-perfect binary codes were constructed in \cite{KroPot:nonsplittable}
in every $\Qq{n}$ such that $n=2^m-1\ge 15$. 
We will construct such set  
with an additional property such that 
after shortening it gives a required orthogonal array.

At first, we need a set $M_k\subset \{0,1,2,3\}^k$, $k={2^{m-2}}\ge 4$,
of vertices of $H(k,4)$  with the following properties:
\begin{itemize}

 \item[(I)] for every word $\bar x$ in $\{0,1,2,3\}^k$ 
 and for every position $i$ from $\{1,\ldots,k\}$, 
 exactly two words from
 $M_k$ have the same values as $\bar x$ 
 in all positions may be except the $i$-th position 
 (in terms of \cite{KroPot:nonsplittable}, $M_k$ is a $2$-fold MDS code);
 
 \item[(II)] $M_k$ cannot be partitioned into two independent sets 
 (in terms of \cite{KroPot:nonsplittable}, it is unsplittable); 
 
 \item[(III)] 
 $(x_1,\ldots,x_{k-1},0)\in M_k$
 if and only if $(x_1,\ldots,x_{k-1},1)\in M_k$;
 
 $(x_1,\ldots,x_{k-1},2)\in M_k$
 if and only if $(x_1,\ldots,x_{k-1},3)\in M_k$. 
 \end{itemize}
 We construct $M_k$ in three steps.
 
 1. We start with defining $M_{2},M'_{2}\subset\{0,1,2,3\}^2$ by listing their elements:
\begin{equation}\label{eq:M0M1}
M_2=\{\underline{00},\underline{01},\underline{10},\underline{12},22,23,31,33\},\quad
M'_2=\{00,\underline{01},\underline{11},\underline{12},22,23,30,33\}.
\end{equation}

2. Define $M_3=M_2|0 \cup M'_2|1 \cup \overline M_2|2 \cup \overline M'_2|3$.

3. Recursively define $M_{i}=M_{i-1}|0 \cup M_{i-1}|1 \cup \overline M_{i-1}|2 \cup  \overline M_{i-1}|3$,
$i=4,\ldots,k$.

From step 1, we can directly check (I) for $i=1,2$. 
Step 2 guarantees (I) for $i=3$.
Step 3 guarantees (I) for $i=4,\ldots,k$ and (III).
The $7$-cycle 
induced by the vertices
$$\underline{01}0...0, \quad
\underline{00}0...0, \quad \underline{10}0...0, \quad
\underline{12}0...0, \quad \underline{12}1...0, \quad
\underline{11}1...0, \quad \underline{01}1...0 $$
supports (II) because it is impossible to distribute these $7$ elements between two independent sets.

Next, we enumerate the words of $\{0,1\}^3$:
$$ 
\begin{array}{rrrr}
\bar z_{0,0}=000, & 
\bar z_{1,0}=110, & 
\bar z_{2,0}=011, & 
\bar z_{3,0}=101, \\ 
\bar z_{0,1}=111, &
\bar z_{1,1}=001, &
\bar z_{2,1}=100, &
\bar z_{3,1}=010,
\end{array}
$$
the even-weight words of $\{0,1\}^4$: 
$$\bar y_{i,0,b}=\bar z_{i,b}|b, \qquad i=0,1,2,3,\quad b=0,1,$$
and the odd-weight words of $\{0,1\}^4$:
$$\bar y_{i,1,b}=\bar z_{i,b}|(1-b), \qquad i=0,1,2,3, \quad b=0,1.$$

We choose a $1$-perfect code in $\{0,1\}^{k-1}$ and denote it $P_{k-1}$. For example,
$P_3$ can be $\{000,111\}$.
Now, we define 
\begin{eqnarray}\label{eq:Phelps}
C_{2^m-1}&=&\big\{(\bar y_{a_{1},c_{1},b_{1}}|\ldots|\bar y_{a_{k-1},c_{k-1},b_{k-1}}|\bar z_{a_k,b_k}) \,:\, 
\\ \nonumber &&\ \ 
 (a_1,\ldots,a_k)\in M_k, \ (c_1,\ldots,c_{k-1})\in P_{k-1}, \ (b_1,\ldots,b_k)\in\{0,1\}^k
 \big\}. 
\end{eqnarray}
Construction \eqref{eq:Phelps} is a variant of the Phelps construction~\cite{Phelps84}
of $1$-perfect binary codes adopted in~\cite{KroPot:nonsplittable}
to construct $l$-fold $1$-perfect codes.
We state the following.
\begin{itemize}
 \item[(i)] $C_{2^m-1}$ is a $2$-fold $1$-perfect code \cite{KroPot:nonsplittable}
 (this property is derived from (I) and the construction of $C_{2^m-1}$).
 \item[(ii)] $C_{2^m-1}$ is unsplittable \cite{KroPot:nonsplittable}
 (from (II), we can find a sequence from odd number of codewords that cannot all belong
 to the union of two $1$-perfect codes).
 \item[(iii)] $\bar x|0 \in C_{2^m-1}$ if and only if $\bar x|1 \in C_{2^m-1}$.
 This property is new in comparing with the results of \cite{KroPot:nonsplittable}.
 It follows directly from (III) and the enumeration of $\bar z_{i,b}$:
 if we invert the last position of a codeword ending by 
 $\bar z_{i,b}$, we obtain a word ending by $\bar z_{\pi(i),\pi(b)}$,
 where $\pi=(0\,1)(2\,3)$; from (III) we see that the word obtained also belongs to $C_{2^m-1}$.
 \end{itemize}

 So, $(C_{2^m-1},\overline C_{2^m-1})$ is an equitable partition with quotient
 matrix $[[1,2^m-2],[2,2^m-3]]$. 
 By (iii), we have $C_{2^m-1}=C_{2^m-2}|0 \cup C_{2^m-2}|1$, 
 where, straightforwardly,
 $(C_{2^m-2},\overline C_{2^m-2})$ 
 is an equitable partition with quotient
 matrix $[[0,2^m-2],[1,2^m-3]]$, i.e., $C_{2^m-2}$ is an OA$(2^{2^m-m-1},2^m-2,2,2^{m-1}-1)$.
 If $C_{2^m-2}$ is a punctured $1$-perfect code,
  then $C_{2^m-1}$ includes
  a perfect code $C'|0 \cup C''|1$, where 
  $C_{2^m-2} = C' \cup C''$;
  by the definitions,  in this case
 $C_{2^m-1}$ is splittable, a contradiction.
\end{proof}

Although the punctured $1$-perfect codes in $Q_{14}$ do not exhaust
all the OA$(2048,14,2,7)$, 
yet there is some interest in classification of such codes up to equivalence.
This can be easily done by a straightforward computer-aided approach, 
using the  classification \cite{OstPot:15} of the 
$1$-perfect codes in $Q_{15}$. The calculations were simplified by utilizing
the fact that a code from the considered class is uniquely determined
by the set of its even-weight codewords. 
The second claim of the following theorem
is a side result of the classification.

\begin{theorem}\label{th:14_}
 There are exactly $14960$ equivalence classes of punctured $1$-perfect codes in $Q_{14}$.
 For codes in $14874$ of these classes, the set of even-weight codewords is equivalent to the set  
 of odd-weight codewords; for the remaining $86$ classes, this is not the case.
\end{theorem}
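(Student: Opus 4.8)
The plan is to reduce the classification to the already-completed classification of the $1$-perfect codes of length $15$ and then perform isomorph rejection. By definition, a punctured $1$-perfect code $C\subseteq\{0,1\}^{14}$ is exactly the image of some $1$-perfect code $D\subseteq\{0,1\}^{15}$ under deleting the last coordinate. Since $D$ has minimum distance $3$, this projection is injective, so $|C|=|D|=2^{11}=2048$, and for each $\bar x\in C$ exactly one of $\bar x|0$, $\bar x|1$ lies in $D$ (if both did, they would be at distance $1$); thus $\bar x\mapsto \bar x|\beta(\bar x)$ is a bijection $C\to D$. Every such $D$ is equivalent to one of the $5983$ representatives produced by \"Osterg{\aa}rd and Pottonen \cite{OstPot:15}. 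Hence, to obtain a representative of every equivalence class of punctured codes it suffices to take each of these $5983$ representatives $D$, delete each of its $15$ coordinates, and retain the resulting subsets of $\{0,1\}^{14}$ up to equivalence under $\mathrm{Aut}(\Qq{14})$.

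First I would cut down the work with an orbit argument: if two coordinates of $D$ lie in the same orbit of $\mathrm{Aut}(D)$, then the two punctured codes differ by an automorphism of $\Qq{14}$ and so are equivalent, so it is enough to delete one coordinate per coordinate-orbit of $\mathrm{Aut}(D)$. For each punctured code $C$ obtained I would compute a canonical form under $\mathrm{Aut}(\Qq{14})$ (coordinate permutations together with translations), using the standard graph-encoding and the canonical labelling produced by \texttt{nauty\&traces} \cite{nauty2014}. Here I would exploit the simplification already noted, that a code of this class is determined by its set $C_{\mathrm{even}}$ of even-weight codewords: it suffices to canonize the smaller object $C_{\mathrm{even}}$, which serves as a complete invariant of the equivalence class of $C$. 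Collecting the distinct canonical forms across all representatives and all coordinate-orbits yields the list of equivalence classes, and I expect the count to be $14960$.

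For the second claim, I would run, for each of the $14960$ stored representatives $C$, an equivalence test between the two vertex sets $C_{\mathrm{even}}$ and $C_{\mathrm{odd}}=C\setminus C_{\mathrm{even}}$: form their canonical forms under $\mathrm{Aut}(\Qq{14})$ and compare. Tallying the outcomes should give $14874$ classes for which $C_{\mathrm{even}}$ and $C_{\mathrm{odd}}$ are equivalent and $86$ for which they are not.

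The main obstacle is not any single deep step but the correctness and completeness of the computation, and two points need care. First, the enumeration must be exhaustive: every punctured code really is a last-coordinate puncture of some length-$15$ $1$-perfect code, and every such code is reached by deleting some coordinate of one of the $5983$ representatives, so the orbit reduction must range over all coordinate-orbits and not merely a distinguished coordinate. Second, I must justify that canonizing only $C_{\mathrm{even}}$ loses no information, i.e. that $C_{\mathrm{even}}$ determines $C$ and that equivalences of $C_{\mathrm{even}}$ lift to equivalences of $C$; otherwise the isomorph rejection could merge or split classes incorrectly. To guard against both random and systematic errors I would validate the final tally by double counting the orbit sizes via the orbit--stabilizer theorem, exactly as in the earlier steps of this paper, comparing the sum of $|\mathrm{Aut}(\Qq{14})|/|\mathrm{Aut}(C)|$ over the found classes against the number of (representative $D$, deleted coordinate) incidences weighted by the appropriate stabilizer indices, and checking that the two totals agree.
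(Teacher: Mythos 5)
Your proposal takes essentially the same route as the paper, whose entire argument is the one-sentence remark that the classification is obtained by a straightforward computer search over the $5983$ classes of $1$-perfect codes of length $15$, punctured in each coordinate (orbit), with isomorph rejection simplified by the fact that a punctured code is determined by its even-weight part. The one point you flag but do not resolve is worth stressing: an automorphism of $Q_{14}$ may reverse weight parity, so the canonical form of $C_{\mathrm{even}}$ alone is not a complete invariant of the class of $C$ precisely when $C_{\mathrm{even}}\not\sim C_{\mathrm{odd}}$ (the $86$ exceptional classes), and one must canonize the unordered pair $\{C_{\mathrm{even}},C_{\mathrm{odd}}\}$ or work with $C$ itself.
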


\begin{corollary}\label{c:14}
The number of equivalence classes of OA$(2048,14,2.7)$ is strictly greater than $14960$.
\end{corollary}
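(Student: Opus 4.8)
The plan is to combine the exact count of punctured $1$-perfect codes from Theorem~\ref{th:14_} with the existence result of Theorem~\ref{th:14}, the point being that these two supplies of orthogonal arrays are disjoint as equivalence classes, so their class counts add.

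First I would recall why every punctured $1$-perfect code in $Q_{14}$ is itself an OA$(2048,14,2,7)$. A binary $1$-perfect code of length $15$ is the first cell of an equitable partition with quotient matrix $[[0,15],[1,14]]$, so by Proposition~\ref{p:equit-oa} it is an OA$(2048,15,2,7)$. Projecting out a single coordinate cannot decrease the strength, since any $7$ of the remaining $14$ columns already carry full strength in the original array; and because the minimum distance is $3$, the projection is injective. Hence the image is a simple OA$(2048,14,2,7)$, and the $14960$ equivalence classes of Theorem~\ref{th:14_} yield $14960$ distinct equivalence classes of OA$(2048,14,2,7)$.

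The one point that needs to be stated carefully is that ``being a punctured $1$-perfect code'' is an equivalence invariant. If $C=C'\cup C''$ with $C'|0\cup C''|1$ a $1$-perfect code of length $15$, and $\phi$ is an automorphism of $Q_{14}$, then $\phi$ (a coordinate permutation composed with a translation) lifts to an automorphism $\tilde\phi$ of $Q_{15}$ fixing the last coordinate position, acting by $\tilde\phi(\bar x|y)=\phi(\bar x)|y$. Since $\tilde\phi$ sends a $1$-perfect code to a $1$-perfect code, $\phi(C')|0\cup\phi(C'')|1=\tilde\phi(C'|0\cup C''|1)$ is again $1$-perfect, so $\phi(C)=\phi(C')\cup\phi(C'')$ is again a punctured $1$-perfect code. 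Thus the punctured codes occupy a full union of equivalence classes, exactly the $14960$ counted above, and no OA lying outside this property can be equivalent to any of them.

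Finally I would apply Theorem~\ref{th:14} with $m=4$, which produces an OA$(2^{11},2^4{-}2,2,2^{3}{-}1)=$~OA$(2048,14,2,7)$ that is not a punctured $1$-perfect code. By the invariance just established, its equivalence class is none of the $14960$ punctured-code classes, so the total number of equivalence classes of OA$(2048,14,2,7)$ is at least $14960+1=14961$, which is strictly greater than $14960$. Everything in this argument is routine bookkeeping except the equivalence-invariance step of the previous paragraph, which is the only place where a short genuine argument is required, and hence the main obstacle.
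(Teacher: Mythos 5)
Your argument is correct and is exactly the reasoning the paper intends: the $14960$ classes of punctured $1$-perfect codes from Theorem~\ref{th:14_} are all OA$(2048,14,2,7)$, and the $m=4$ case of Theorem~\ref{th:14} supplies one further class since being a punctured $1$-perfect code is invariant under automorphisms of $Q_{14}$. You have merely written out the routine details (strength preservation and injectivity under puncturing, and the lifting of an automorphism of $Q_{14}$ to one of $Q_{15}$) that the paper leaves implicit.
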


%====================================================
%====================================================
%====================================================
\section{Conclusions and open problems}
\indent\par
We classified all the orthogonal arrays
OA$(1536,13,2,7)$ and OA$(768,12,2,6)$
and proved that the classification of the orthogonal arrays
OA$(2048,14,2,7)$ cannot be completed  by
considering only the punctured $1$-perfect binary codes of length $14$.
The classification of the OA$(2048,14,2,7)$
remains an open challenging problem.
The approach developed in the current paper 
is probably too hard
to complete the case  OA$(2048,14,2,7)$, and finishing 
the classification is expected to require 
more theoretical results or(and) more computing capacity.

The existence of binary (and non-binary) orthogonal arrays attending the 
Bierbrauer--Friedman bound is not known in infinitely many cases; 
this is another challenging direction.
The Fon-Der-Flaass construction \cite[Proposition~2]{FDF:PerfCol} allows to construct equitable $2$-partitions of hypercubes for 
infinite series of quotient matrices with the first coefficient $0$. 
However, there are putative 
quotient matrices of type
$[[0,n],[c,d]]$
that are not covered by that construction.
In this direction,
the first two open questions are about 
existence of equitable partitions
with quotient matrices $[[0,25],[7,18]]$ and $[[0,27],[5,22]]$
(equivalently, orthogonal arrays OA$(7\cdot 2^{20},25,2,15)$
and OA$(5\cdot 2^{22},27,2,15)$).

Summarizing two theoretical results of the current paper, Theorem~\ref{th:3part}
and Theorem~\ref{th:14}, we can conclude
that an orthogonal array with the orthogonal-array parameters 
OA$(2^{n-m+1},n=2^m-3,2,2^{m-1}-2)$
of a
shortened punctured $1$-perfect binary code
(a code obtained by puncturing and then shortening a $1$-perfect code)
induces an equitable $3$-partition with the quotient matrix
$[[0,1,2^m-4],[1,0,2^m-4],[2,2,2^m-7]]$,
but is not necessarily a 
shortened punctured $1$-perfect code. 
This is an OA analog of similar results
for the error-correcting codes with (code) parameters of doubly or triply shortened 
$1$-perfect binary codes \cite{Kro:2m-3,Kro:2m-4,KOP:2011,OstPot:13-512-3}.
Noting the nice algebraic and combinatorial properties of equitable partitions,
it worth to look for more results showing that some classes of (optimal) combinatorial configurations are in one-to-one correspondence with classes of equitable partitions
with specially defined quotient matrices.

%====================================================
%====================================================
%====================================================
\section*{Acknowledgments}

The author is grateful
to Aleksandr Krotov for the help in programming, 
to the anonymous referee 
for valuable suggestions towards improving the paper,
and
to the Supercomputing Center of the Novosibirsk 
State University for provided computational resources.

This work was funded by the Russian Science Foundation under grant 18-11-00136.

\providecommand\path[1]{}

%    \bibliographystyle{elsarticle-num}
%    \bibliography{../../k}

\begin{thebibliography}{10}
\expandafter\ifx\csname url\endcsname\relax
  \def\url#1{\texttt{#1}}\fi
\expandafter\ifx\csname urlprefix\endcsname\relax\def\urlprefix{URL }\fi
\expandafter\ifx\csname href\endcsname\relax
  \def\href#1#2{#2} \def\path#1{#1}\fi

\bibitem{Bierbrauer:95}
J.~Bierbrauer, Bounds on orthogonal arrays and resilient functions,
  \href{http://onlinelibrary.wiley.com/journal/10.1002/(ISSN)1520-6610}{J.
  Comb. Des.} 3~(3) (1995) 179--183.
\newblock \href {http://dx.doi.org/10.1002/jcd.3180030304}
  {\path{doi:10.1002/jcd.3180030304}}

\bibitem{BGS:96}
J.~Bierbrauer, K.~Gopalakrishnan, D.~R. Stinson, Orthogonal arrays, resilient
  functions, error-correcting codes, and linear programming bounds,
  \href{http://epubs.siam.org/journal/sjdmec}{SIAM J. Discrete Math.} 9~(3)
  (1996) 424--452.
\newblock \href {http://dx.doi.org/10.1137/S0895480194270950}
  {\path{doi:10.1137/S0895480194270950}}

\bibitem{BMS:2017:few}
P.~Boyvalenkov, T.~Marinova, M.~Stoyanova, Nonexistence of a few binary
  orthogonal arrays,
  \href{http://www.sciencedirect.com/science/journal/0166218X}{Discrete Appl.
  Math.} 217~(2) (2017) 144--150.
\newblock \href {http://dx.doi.org/10.1016/j.dam.2016.07.023}
  {\path{doi:10.1016/j.dam.2016.07.023}}

\bibitem{BulRy:2018}
D.~A. Bulutoglu, K.~J. Ryan,
  \href{https://ajc.maths.uq.edu.au/pdf/70/ajc_v70_p362.pdf}{Integer
  programming for classifying orthogonal arrays},
  \href{http://ajc.maths.uq.edu.au}{Australas. J. Comb.} 7~(3) (2018) 362--385.

\bibitem{FDF:PerfCol}
D.~G. Fon-Der-Flaass, Perfect $2$-colorings of a hypercube,
  \href{http://link.springer.com/journal/11202}{Sib. Math. J.} 48~(4) (2007)
  740--745, translated from
  \href{http://www.mathnet.ru/php/journal.phtml?jrnid=smj\&option_lang=eng}{Sib.
  Mat. Zh.} 48(4) (2007), 923-930.
\newblock \href {http://dx.doi.org/10.1007/s11202-007-0075-4}
  {\path{doi:10.1007/s11202-007-0075-4}}

\bibitem{FDF:CorrImmBound}
D.~G. Fon-Der-Flaass, \href{http://mi.mathnet.ru/eng/semr149}{A bound on
  correlation immunity}, \href{http://semr.math.nsc.ru}{Sib. Ehlektron. Mat.
  Izv.} 4 (2007) 133--135.
\newblock\urlprefix\url{http://mi.mathnet.ru/eng/semr149}

\bibitem{FDF:12cube.en}
D.~G. Fon-Der-Flaass, {Perfect colorings
  of the $12$-cube that attain the bound on correlation immunity},
  \href{http://semr.math.nsc.ru}{Sib. Ehlektron. Mat. Izv.} 4 (2007) 292--295,
  in Russian. English translation: \url{https://arxiv.org/abs/1403.8091}.

\bibitem{Friedman:92}
J.~Friedman, On the bit extraction problem, in: Foundations of Computer
  Science, IEEE Annual Symposium on, IEEE Computer Society, Los Alamitos, CA,
  USA, 1992, pp. 314--319.
\newblock \href {http://dx.doi.org/10.1109/SFCS.1992.267760}
  {\path{doi:10.1109/SFCS.1992.267760}}

\bibitem{Gogsil:93:equitable}
C.~Godsil, Equitable partitions, in: D.~Mikl\'os, V.~T. S\'os, T.~Sz\"onyi
  (Eds.), Combinatorics: Paul Erd\"os is Eighty. Vol. 1, Bolyai Soc. Math.
  Stud., J\'anos Bolyai Mathematical Society, Budapest, 1993, pp. 173--192.

\bibitem{HSS:OA}
A.~S. Hedayat, N.~J.~A. Sloane, J.~Stufken, Orthogonal Arrays. Theory and
  Applications, Springer Series in Statistics, Springer, New York, NY, 1999.
\newblock \href {http://dx.doi.org/10.1007/978-1-4612-1478-6}
  {\path{doi:10.1007/978-1-4612-1478-6}}

\bibitem{KO:alg}
P.~Kaski, P.~R.~J. {\"O}sterg{\aa}rd, Classification Algorithms for Codes and
  Designs, Vol.~15 of Algorithms Comput. Math., Springer, Berlin, 2006.
\newblock \href {http://dx.doi.org/10.1007/3-540-28991-7}
  {\path{doi:10.1007/3-540-28991-7}}

\bibitem{KasPot08}
P.~Kaski, O.~Pottonen,
  \href{http://www.hiit.fi/files/admin/publications/Technical_Reports/hiit-tr-2008-1.pdf}{libexact
  user's guide, version 1.0}, Tech. Rep. 2008-1, Helsinki Institute for
  Information Technology HIIT (2008).
  
\bibitem{Khalyavin:2010.en}
A.~V. Khalyavin, Estimates of the capacity of orthogonal arrays of large
  strength,
  \href{http://link.springer.com/journal/volumesAndIssues/11970}{Mosc. Univ.
  Math. Bull.} 65~(3) (2010) 130--131.
\newblock \href {http://dx.doi.org/10.3103/S0027132210030101}
  {\path{doi:10.3103/S0027132210030101}}

\bibitem{Kirienko2002}
D.~Kirienko, On new infinite family of high order correlation immune unbalanced
  {B}oolean functions, in: Proceedings IEEE International Symposium on
  Information Theory ISIT 2002, Lausanne, Switzerland, June 30 -- July 5, 2002,
  2002, p. 465.
\newblock \href {http://dx.doi.org/10.1109/ISIT.2002.1023737}
  {\path{doi:10.1109/ISIT.2002.1023737}}

\bibitem{Kro:2m-3}
D.~S. Krotov, On the binary codes with parameters of doubly-shortened
  $1$-perfect codes, \href{http://link.springer.com/journal/10623}{Des. Codes
  Cryptography} 57~(2) (2010) 181--194.
\newblock \href {http://dx.doi.org/10.1007/s10623-009-9360-5}
  {\path{doi:10.1007/s10623-009-9360-5}}

\bibitem{Kro:2m-4}
D.~S. Krotov, On the binary codes with parameters of triply-shortened
  $1$-perfect codes, \href{http://link.springer.com/journal/10623}{Des. Codes
  Cryptography} 64~(3) (2012) 275--283.
\newblock \href {http://dx.doi.org/10.1007/s10623-011-9574-1}
  {\path{doi:10.1007/s10623-011-9574-1}}

\bibitem{KOP:2011}
D.~S. Krotov, P.~R.~J. {\"O}sterg{\aa}rd, O.~Pottonen, On optimal binary
  one-error-correcting codes of lengths $2^m-4$ and $2^m-3$,
  \href{http://ieeexplore.ieee.org/xpl/RecentIssue.jsp?punumber=18}{IEEE Trans.
  Inf. Theory} 57~(10) (2011) 6771--6779.
\newblock \href {http://dx.doi.org/10.1109/TIT.2011.2147758}
  {\path{doi:10.1109/TIT.2011.2147758}}
  
\bibitem{KroPot:nonsplittable}
D.~S. Krotov, V.~N. Potapov, On multifold {MDS} and perfect codes that are not
  splittable into onefold codes,
  \href{http://link.springer.com/journal/11122}{Probl. Inf. Transm.} 40~(1)
  (2004) 5--12, translated from
  \href{http://www.mathnet.ru/php/journal.phtml?jrnid=ppi\&option_lang=eng}{Probl.
  Peredachi Inf.} 40~(1) (2004), 6-14.
\newblock \href {http://dx.doi.org/10.1023/B:PRIT.0000024875.79605.fc}
  {\path{doi:10.1023/B:PRIT.0000024875.79605.fc}}

\bibitem{KroVor:CorIm}
D.~S. Krotov, V.~K. Vorob'ev, {On
  unbalanced {B}oolean functions attaining the bound $2n/3-1$ on the
  correlation immunity}, E-print 1812.02166v2, arXiv.org (2018).
\newblock\urlprefix\url{https://arxiv.org/abs/1812.02166}

\bibitem{MWS}
F.~J. MacWilliams, N.~J.~A. Sloane, The Theory of Error-Correcting Codes,
  Amsterdam, Netherlands: North Holland, 1977.

\bibitem{nauty2014}
B.~D. McKay, A.~Piperno, Practical graph isomorphism, {II}, J. Symb. Comput. 60
  (2014) 94--112.
\newblock \href {http://dx.doi.org/10.1016/j.jsc.2013.09.003}
  {\path{doi:10.1016/j.jsc.2013.09.003}}

\bibitem{OstPot:13-512-3}
P.~R.~J. {\"O}sterg{\aa}rd, O.~Pottonen, Two optimal one-error-correcting codes
  of length 13 that are not doubly shortened perfect codes,
  \href{http://link.springer.com/journal/10623}{Des. Codes Cryptography}
  59~(1-3) (2011) 281--285.
\newblock \href {http://dx.doi.org/10.1007/s10623-010-9450-4}
  {\path{doi:10.1007/s10623-010-9450-4}}
 
\bibitem{OstPot:15}
P.~R.~J. {\"O}sterg{\aa}rd, O.~Pottonen, The perfect binary
  one-error-correcting codes of length 15: Part {I}---classification,
  \href{http://ieeexplore.ieee.org/xpl/RecentIssue.jsp?punumber=18}{IEEE Trans.
  Inf. Theory} 55~(10) (2009) 4657--4660.
\newblock 
  \path{doi:10.1109/TIT.2009.2027525}

\bibitem{Phelps84}
K.~T. Phelps, A general product construction for error correcting codes,
  \href{http://epubs.siam.org/journal/sjamdu}{SIAM J. Algebraic Discrete
  Methods} 5~(2) (1984) 224--228.
\newblock \href {http://dx.doi.org/10.1137/0605023}
  {\path{doi:10.1137/0605023}}
  
\bibitem{Potapov:2010}
V.~N. Potapov, {On perfect colorings of
  {B}oolean $n$-cube and correlation immune functions with small density},
  \href{http://semr.math.nsc.ru}{Sib. Ehlektron. Mat. Izv.} 7 (2010) 372--382,
  in Russian, English abstract.
  
\bibitem{Pot:2012:color}
V.~N. Potapov, On perfect $2$-colorings of the $q$-ary $n$-cube,
  \href{http://www.sciencedirect.com/science/journal/0012365X}{Discrete Math.}
  312~(6) (2012) 1269--1272.
\newblock \href {http://dx.doi.org/10.1016/j.disc.2011.12.004}
  {\path{doi:10.1016/j.disc.2011.12.004}}

\bibitem{SeiZem:OA:66}
E.~Seiden, R.~Zemach, On orthogonal arrays,
  \href{http://ProjectEuclid.org/aoms}{Ann. Math. Stat.} 37~(5) (1966)
  1355--1370.
\newblock \href {http://dx.doi.org/10.1214/aoms/1177699280}
  {\path{doi:10.1214/aoms/1177699280}}

\bibitem{Tarannikov2000}
Y.~Tarannikov, \href{https://eprint.iacr.org/2000/005}{On resilient {B}oolean
  functions with maximal possible nonlinearity}, Cryptology ePrint Archive
  2000/005 (2000).
\newline\urlprefix\url{https://eprint.iacr.org/2000/005}
  
\bibitem{Taran:private2018}
Y.~V. Tarannikov, Private communication (June 2018).


  
  \bibitem{VorFDF}
K.~V. Vorobev, D.~G. Fon-Der-Flaass, \href{http://mi.mathnet.ru/eng/semr228}{On
  perfect $2$-colorings of the hypercube}, \href{http://semr.math.nsc.ru}{Sib.
  Ehlektron. Mat. Izv.} 7 (2010) 65--75, in Russian, with English abstract.


\end{thebibliography}
%    \end{document}

\providecommand\href[2]{#2} \providecommand\url[1]{\href{#1}{#1}}
  \def\DOI#1{{\small {DOI}:
  \href{http://dx.doi.org/#1}{#1}}}\def\DOIURL#1#2{{\small{DOI}:
  \href{http://dx.doi.org/#2}{#1}}}

\end{document}

THIS IS THE END OF THE PAPER BUT NOT THE END OF THE FILE!
SEE SEVERAL ATTACHMENTS BELOW

| | |
V V V

# the following PYTHON program constructs OA(2048,14,2,7)
# that is not a shortened 1-perfect code,
# check the equitable-partition parameters [[0,14],[2,12]]
# and the non-bipartiteness of the distance-2 graph

H14={"0","1","2","3"};
H24={a+b for a in H14 for b in H14}
H34={a+b for a in H24 for b in H14}
H44={a+b for a in H34 for b in H14}
M2 ={"00","01","10","12","22","23","31","33"}
M2_={"00","01","11","12","22","23","30","33"}
M3=set( ["0"+a for a in M2 ]+["2"+a for a in (H24-M2 )]
       +["1"+a for a in M2_]+["3"+a for a in (H24-M2_)] )
M4=[b+a for a in "01" for b in M3]+[b+a for a in "23" for b in (H34-M3)]
M4=[[int(a) for a in c] for c in M4]
Z= [["000" ,"111" ],["110" ,"001" ],["011" ,"100" ],["101" ,"010" ]]
Y=[[["0000","1111"],["1100","0011"],["0110","1001"],["1010","0101"]],
   [["0001","1110"],["1101","0010"],["0111","1000"],["1011","0100"]]]
# now M4 is a 2-MDS
P3=[[0,0,0],[1,1,1]]
# 2-fold 1-perfect code, string representation:
C=[Y[c[0]][m[0]][b0]+Y[c[1]][m[1]][b1]+Y[c[2]][m[2]][b2]+Z[m[3]][b3]
    for c in P3
        for m in M4
            for b0 in [0,1]
                for b1 in [0,1]
                    for b2 in [0,1]
                        for b3 in [0,1]]
# OA(2048,14,2,7), binary representation:
C_=[int(c[:-1],2) for c in C if c[-1]=="0"]
print "Code size:",len(C_)
H=[0]*(1<<14)
for c in C_:
    for i in range(14):
        H[c^(1<<i)]+=1

print "1) Check the quotient matrix [[0,14],[2,12]] of the equitable partition:"
print " a) every codeword has ",list(set([H[c] for c in C_])), "code neighbors;"
print " b) every non-codeword has ",list(set([H[c] for c in range(1<<14) if c not in C_])), "code neighbors."

print "2) Check that it is non-splittable:"
print " the distance-2 graph contains the odd-length cycle"
print " [00000000000000,\n  00000000001100,\n  00000101001100,\n  00000101010100,\n  00110101010100,\n  00110101000000,\n  00110000000000] :"
print {0b00000000000000, 0b00000000001100, 0b00000101001100, 0b00000101010100, 0b00110101010100, 0b00110101000000, 0b00110000000000}.issubset(C_)

#########################################################
# the following PYTHON program constructs OA(1536,13,2,7)

CM={0b000000:5, 0b100000:5, 0b111111:5, 0b011111:5,
    0b000110:4, 0b010110:4, 0b111001:4, 0b101001:4,
    0b000011:3, 0b001011:3, 0b111100:3, 0b110100:3,
    0b010001:2, 0b010101:2, 0b101110:2, 0b101010:2,
    0b011000:1, 0b011010:1, 0b100111:1, 0b100101:1, 
    0b001100:0, 0b001101:0, 0b110011:0, 0b110010:0,
   }

prt = lambda x: 1&(x^(x>>1)^(x>>2)^(x>>3)^(x>>4)^(x>>5)) # parity check

OA=[]
for c in CM: 
    for b in range(1<<6):
        OA.append( b + 64*(c^b) + 64*64*prt(b^((c^b)&(1<<CM[c]))) )

# OA is ready; now write it to file
out=open("oa.1536.13.2.7.txt","w")
for x in OA: 
    out.write(bin(x+64*64)[-12:]+"\n")

out.close()

#########################################################

/**
* '13-1-w22.c'
* Starting point for the classification of OA(1536,13,2,7)
* by the C++ program below:
* local [[0,13],[3,10]] partitions of radius 2;
* correspond to
* 10 non-isomorphic 2-factors in K13,
* 20 different situations with respect to the highest coordinate
**/
int SEEDS[][15]={
{0,3,6,5,24,48,40,192,384,320,2560,1536,5120,6144,LEND},
{0,3,6,5,24,48,40,192,384,768,576,3072,5120,6144,LEND},
{0,3,6,5,24,48,40,2112,192,384,768,1536,5120,6144,LEND},
{0,3,6,5,24,48,96,192,384,768,520,3072,5120,6144,LEND},
{0,3,6,5,24,48,96,72,2176,384,768,1536,5120,6144,LEND},
{0,3,6,5,24,48,96,192,384,264,2560,1536,5120,6144,LEND},
{0,3,6,12,9,48,96,192,384,768,528,3072,5120,6144,LEND},
{0,3,6,5,24,48,96,192,136,2304,768,1536,5120,6144,LEND},
{0,3,6,12,24,17,96,192,384,768,544,3072,5120,6144,LEND},
{0,3,6,5,2056,24,48,96,192,384,768,1536,5120,6144,LEND},
{0,3,6,12,24,48,96,192,384,768,513,3072,5120,6144,LEND},
{0,3,6,12,9,48,96,192,144,2304,768,1536,5120,6144,LEND},
{0,3,6,12,9,48,96,192,384,272,2560,1536,5120,6144,LEND},
{0,3,6,12,9,2064,48,96,192,384,768,1536,5120,6144,LEND},
{0,3,6,12,24,48,96,192,384,257,2560,1536,5120,6144,LEND},
{0,3,6,12,24,17,2080,96,192,384,768,1536,5120,6144,LEND},
{0,3,6,12,24,48,96,192,129,2304,768,1536,5120,6144,LEND},
{0,3,6,12,24,48,33,2112,192,384,768,1536,5120,6144,LEND},
{0,3,6,12,24,48,96,65,2176,384,768,1536,5120,6144,LEND},
{0,2049,3,6,12,24,48,96,192,384,768,1536,5120,6144,LEND},
{LEND}
}

/*
 * This C++ program finds radius-? (see data.WT) local solutions for equitable partitions 
 * of 13-cube (see #def NN) with quotient matrix [[0,13],[3,10]] (see #def AA,CC)
 * This program finds the solutions restricted by the condition
 * the layers of some weights are already filled (for starting bit 0 and 1 separately)
 * 
 * Starting parameters of the program
 * -w weight to reconstruct
 * -h the highest bit in the words to reconstructed
 * -b starting seed number (default 0)
 * -e last seed number (default - unlimited)
 * -s the only seed number (starting=last)
 * -i input file with seeds
 * -o output file
 * 
 * 
 *  1  | 0##+---------
 *  0  | *0#----------
 *
 *  S:        S2:         S3:          S4:          S5:
 *  [0 2 10]  [11 15 40]  [50 40 130]  [90 85 320]  [136 156 500]
 *  [2 0 10]  [15 11 40]  [40 50 130]  [85 90 320]  [156 136 500]
 *  [3 3  6]  [12 12 42]  [39 39 142]  [96 96 303]  [150 150 492]
 */

#include <iostream>
#include <fstream>
#include <ctime>
#include <signal.h>
#include <stdio.h>
#include <stdlib.h> 
extern "C" {
#include "libexact/exact.h"
}
// g++ -O3 oa13.cpp libexact/libexact.a nauty/nauty.a -o 13

// !!! below are the parameters of the program
// ======================
// coefficients of the quotient matrix
#define NN 13
#define AA 0
#define CC 3
#define SQUAREFREE 0
#define COMPLETESQUARES 0

struct {
// The next file should contain int SEEDS[][...]={{..},..,}};
// see format in the description of readNextSeed(...)
  char* ifile;
  char* ofile;
  int STARTseed;
  int LASTseed;
// the weight that should be reconstructed
  int WT;
// the highest bit fixed for the reconstructed words
  int hbit;
  int ask;
  int print;
  int trueAut;
} data = { "13-1-w22.c", "13-1-w23.c", 0, -1, 3, 1, 1, 0, 1};

#define LEND -1

int SEED1[2000];
char in[128];

/**
 * reading file with seeds in a special format.
 * each seed is in a separate line starting with '{';
 * the elements are separated by ','
 * the end of seed is indicated by the element LEND (only 'L' is essential)
 * the end of list is indicaled by the length-0 seed (with only LEND)
 * !!! in particular, the list can be included in this source file 
 * if there are no other '{'-started lines before
 **/
int readNextSeed(std::ifstream &fi) { 
    char ln[5000]="--"; int cnt=0;
    while (ln[0]!='{') { if (fi.eof()) return 0; fi.getline(ln,5000); }
    for (int pnt=1,x=0; (ln[pnt]!='L'); pnt++) 
        if (ln[pnt]!=',') x=x*10+(ln[pnt]-'0'); else SEED1[cnt++]=x, x=0;
    SEED1[cnt]=LEND; return cnt;
}

int SEED0[]={LEND}; // common part of all seeds

#define MAXN 620

#define MAX_SOL_PER_SEED 1000000
#define MAX_SOL  320000000
#define HASHSIZE 100000007 // 600000001 500000003 400000009 300000007 150000001 200000033 100000007 // recommended to be prime
// below, insert the folder name where NAUTY is installed
#include "nauty/nauty.h"
#include "nauty/naututil.h"
 
int orbits[MAXN], lab[MAXN], ptn[MAXN];
static DEFAULTOPTIONS_GRAPH(nautyopt);
static DEFAULTOPTIONS_GRAPH(nautyopt0);
statsblk stats;
int32_t hash_table[HASHSIZE];

clock_t t = clock();
// catching ctrl-C
void my_handler(int s){ printf("%.1fsec = %.1fmin = %.1fh\n",((float)(clock()-t))/CLOCKS_PER_SEC,((float)(clock()-t))/CLOCKS_PER_SEC/60,((float)(clock()-t))/CLOCKS_PER_SEC/3600);  exit(1); }

// check if two graphs are equal
int EQUAL(graph* g1, graph *g2, int m, int n){
    int mn=m*n;
    for (int i=0; i<mn; i++) if (g1[i]!=g2[i]) return 0;
    return 1;
}

int wt(int v) { int w=0, x=v; while (x>0) { w+=x&1; x>>=1; }; return w; }

graph *gg[MAX_SOL_PER_SEED] = { new graph[MAXN*MAXM], NULL };
int gn, gN=0, totalwrong=0;
double *gm = new double[MAX_SOL_PER_SEED];
double *ga = new double[MAX_SOL_PER_SEED];

int H[1<<NN], HH[1<<NN];
int sd_sz;

int main(int argc, char **argv){
    // catching ctrl-C
    struct sigaction sigIntHandler;
    sigIntHandler.sa_handler = my_handler;
    sigemptyset(&sigIntHandler.sa_mask);
    sigIntHandler.sa_flags = 0;
    sigaction(SIGINT, &sigIntHandler, NULL);
#define WRONGPARAM { std::cerr << "Wrong param\n"; return 1; }
 for (int i=1; i<argc; i++) {
    if (argv[i][0]!='-') WRONGPARAM;
    switch (argv[i][1]) {
        case 'b': case 'B':
            if (argv[i][2]) data.STARTseed = atoi(&argv[i][2]);
            else {
                if (++i >= argc)  WRONGPARAM;
                data.STARTseed = atoi(argv[i]);
            }
            break;
        case 'e': case 'E':
            if (argv[i][2]) data.LASTseed = atoi(&argv[i][2]);
            else {
                if (++i >= argc)  WRONGPARAM;
                data.LASTseed = atoi(argv[i]);
            }
            break;
        case 's': case 'S':
            if (argv[i][2]) data.STARTseed = data.LASTseed = atoi(&argv[i][2]);
            else {
              if (++i >= argc)  WRONGPARAM;
              data.STARTseed = data.LASTseed = atoi(argv[i]);
            }
            break;
        case 'h': case 'H':
            if (argv[i][2]) data.hbit = atoi(&argv[i][2]);
            else {
              if (++i >= argc)  WRONGPARAM;
              data.hbit = atoi(argv[i]);
            }
            break;
        case 'w': case 'W':
            if (argv[i][2]) data.WT = atoi(&argv[i][2]);
            else {
              if (++i >= argc)  WRONGPARAM;
              data.WT = atoi(argv[i]);
            }
            break;
        case 'i': case 'I':
            if (argv[i][2]) data.ifile = (&argv[i][2]);
            else {
              if (++i >= argc)  WRONGPARAM;
              data.ifile = (argv[i]);
            }
            break;
        case 'o': case 'O':
            if (argv[i][2]) data.ofile = (&argv[i][2]);
            else {
              if (++i >= argc)  WRONGPARAM;
              data.ofile = (argv[i]);
            }
            break;
        case 'y': case 'Y': data.ask=0; break;
        case 'p': case 'P': data.print=1; break;
        case 'a': case 'A': data.trueAut=0; break;
        default: WRONGPARAM;
    }
 }
 printf(" [a,b;c,d]: [%d,%d;%d,%d] \n", AA,NN-AA,CC,NN-CC);
 printf(" weight %d, first bit %d \n", data.WT, data.hbit);
 std::cout<< " input: " << data.ifile << " --> output: " << data.ofile << "\n";
 printf(" Seed interval: [ %d , %d ]\n Ready? ",data.STARTseed,data.LASTseed);
 char str[128]; if (data.ask) std::cin>>str;

 std::ifstream fi(data.ifile);
 std::ofstream fout(data.ofile);

 fout << "int SEEDS[][300]={\n";

 int ttn=0,collisions=0; 
 for (int sd=0; readNextSeed(fi); sd++)
 {  
    if (sd<data.STARTseed) continue;
    if (data.LASTseed>=0) if (sd>data.LASTseed) break;
    gn=0;
    for (int i=0; i<HASHSIZE; i++) hash_table[i]=-1; // !!! if nonequivalence is checked for each seed separately !!!
    // making seed from the common part and the additional part
    int SEED[MAXN];
    for (sd_sz=0; SEED0[sd_sz]!=LEND; sd_sz++) SEED[sd_sz]=SEED0[sd_sz];
    for (int i=0; SEED1[i]!=LEND; i++, sd_sz++) SEED[sd_sz]=SEED1[i];
    // prepairing the seed, counting its automorphism group
    fout << "// " << sd << "#\n";
    for (int v=0; v<(1<<NN); v++) H[v]=0;
    double AUT1=1;
    printf("Seed %d, size %d\n",sd,sd_sz);
    for (int i=0; i<sd_sz; i++ ) H[SEED[i]]=1;
    if (data.trueAut) {
      graph g[MAXN*MAXM]; EMPTYGRAPH(g,MAXM,MAXN);
      for (int i=0; i<sd_sz; i++)
          for (int s=0; s<NN; s++)
              if ((SEED[i]>>s)&1) ADDONEEDGE(g, i+NN, s, MAXM);
      for (int i=0; i<sd_sz+NN; i++) ptn[lab[i]=i]=1;
      ptn[NN-1]=0; ptn[sd_sz+NN-1]=0;
      ptn[NN-2]=0; // !!! because the highest coordinate is special
      nautyopt0.getcanon=FALSE; nautyopt0.defaultptn=FALSE;
      densenauty(g, lab, ptn, orbits, &nautyopt0, &stats, MAXM, sd_sz+NN, 0);
      AUT1=stats.grpsize1; for (int i=0; i<stats.grpsize2; i++) AUT1*=10;
    }
    printf("Aut(%d):%.1f\n",sd,AUT1);
    
    // for each colored vertex, count how many black vertices should be added in the neighborhood:
    for (int v=0; v<(1<<NN); v++) {
        HH[v] = (H[v]? AA:CC);
        for (int k=0; k<NN; k++) HH[v]-=H[v^(1<<k)];
    }
    
//     if (COMPLETESQUARES) for (int v=0; v<(1<<NN); v++) if (wt(v)==WT) if (H[v]==0) {
//         for (int k=0; k<NN; k++) for (int kk=k+1; kk<NN; kk++) 
//             if (H[v^(1<<k)]) if (H[v^(1<<kk)]) if (H[v^(1<<k)^(1<<kk)]) goto complete_square;
//         continue;
//         complete_square: 
//         if (H[v]) continue; // already in the seed
//         H[SEED[sd_sz++]=v]=1;
//         
//     }
    
    // for (int i=0; i<HASHSIZE; i++) hash_table[i]=-1; // !!! nonequivalence is checked separetely for each seed !!!
    exact_t *e = exact_alloc();
    int row_n=0,col_n=0,entr_n=0;
    for (int v=(data.hbit?(1<<(NN-1)):0) ; v<(data.hbit?(1<<NN):(1<<(NN-1)));  v++) // declare rows (!!!! not [0,2^NN) because the highest bit is fixed for this task)
        if (wt(v)==data.WT-1)
            if (HH[v]>0) { exact_declare_row(e,v,HH[v]); row_n++; }
            else if (HH[v]<0) { std::cerr << "Wrong seed error "<< v <<" "<< HH[v] <<" "<< H[v]<< "\n"; return 1; }

    for (int v=(data.hbit?(1<<(NN-1)):0) ; v<(data.hbit?(1<<NN):(1<<(NN-1)));  v++)  // declare columns and entries (!!!! not [0,2^NN)  because the highest bit is fixed for this task)
        if (wt(v)==data.WT)  // if the weight is good 
            if (HH[v]>=CC-AA) // if the vertex has no more than AA black neighbors
                if (H[v]==0) { // if the vertex is not included yet (in the seed)
                    int is_col=1;
                    for (int k=0; k<NN; k++) 
                        if ((v&(1<<k))&&(HH[v^(1<<k)]==0)) { is_col=0; break; }
                    // if (SQUAREFREE) for (int k=0; k<NN; k++) for (int kk=k+1; kk<NN; kk++) 
                    //    if (H[v^(1<<k)]) if (H[v^(1<<kk)]) if (H[v^(1<<k)^(1<<kk)]) { printf("*"); is_col=0; break; }                    
                    if (is_col==0) continue;
                    exact_declare_col(e,v,1); col_n++;
                    for (int k=0; k<NN-1; k++) // (!!!! -1 because the highest bit is 1 for this task)
                        if (v&(1<<k)) { exact_declare_entry(e,v^(1<<k),v); entr_n++; }
                }
    int soln_size; const int *soln;
    printf("%dx%d(%d)\n",row_n,col_n,entr_n);
    while((soln = exact_solve(e, &soln_size)) != NULL) {
        ttn++;
        int new_class = 1; ga[gn]=1;
        if (data.trueAut) {
          graph g[MAXN*MAXM];
          EMPTYGRAPH(g,MAXM,MAXN); EMPTYGRAPH(gg[gn],MAXM,MAXN);
          for (int s=0; s<NN; s++)  lab[s]=s, ptn[s]=1;
          int vertN=NN; // vertex number
          for(int i = 0; i < soln_size; i++, vertN++) {
              ptn[lab[vertN]=vertN]=1;
              for (int s=0; s<NN; s++) 
                 if ((soln[i]>>s)&1) ADDONEEDGE(g, vertN, s, MAXM);
          }
          for(int i = 0; i<sd_sz; i++, vertN++)
              // if (wt(SEED[i])>=data.WT) 
              {   ptn[lab[vertN]=vertN]=1;
                  for (int s=0; s<NN; s++)
                      if ((SEED[i]>>s)&1)  ADDONEEDGE(g, vertN, s, MAXM);
              }
          ptn[NN-1]=0; ptn[vertN-1]=0; 
          ptn[NN-2]=0; // !!! because the highest coordinate is special
          nautyopt.getcanon=TRUE; nautyopt.defaultptn=FALSE;
          densenauty(g, lab, ptn, orbits, &nautyopt, &stats, MAXM, vertN, gg[gn]);
          int32_t hash;
          int i_,coll=0;
          for (hash = hashgraph(gg[gn],MAXM,vertN,13)%HASHSIZE; hash_table[hash]>=0; hash=(hash+1)%HASHSIZE ) {
              i_= hash_table[hash];
              int equal=1;
              for (int l=0; l<MAXM*vertN; l++) if (gg[i_][l]!=gg[gn][l]) { coll++; equal = 0; break; }
              if (equal) { new_class=0; break; }
          }
          if (new_class==0) { // not new; update the number of occurences
              gm[i_]+=ga[i_];
              continue;
          }
          hash_table[hash]=gn;
          collisions+=coll;
          ga[gn]=stats.grpsize1;  for (int i=0; i<stats.grpsize2; i++) ga[gn]*=10;
        } // if (data.trueAut)
        gm[gn]=ga[gn];
        if (data.print) printf("new %d / %d #=%d Aut:%.0f (seed %d) %.1fsec = %.1fmin = %.1fh\n", gn , ttn, soln_size, ga[gn], sd, ((float)(clock()-t))/CLOCKS_PER_SEC,  ((float)(clock()-t))/CLOCKS_PER_SEC/60,  ((float)(clock()-t))/CLOCKS_PER_SEC/3600 );
          fout <<"{";
          for(int i = 0; i<sd_sz; i++) fout << SEED[i] << ",";
          for(int i = 0; i < soln_size; i++) fout << soln[i] << ",";
          fout << "LEND},\n";            
        gn++; gN++;
        if (not gg[gn]) gg[gn]=new graph[MAXN*MAXM];
        if ((gN==MAX_SOL)||(gn==MAX_SOL_PER_SEED)) { std::cerr<< "Too many solutions... "<<gn<<" / "<<ttn<<"\nIncrease the MAX_SOL constant (now "<< MAX_SOL<<")\n"; return 1; }
    }
    exact_free(e);
    int wrong=0;
    for (int i=gn; i-->0; ) if (gm[i]!=AUT1) { wrong++,totalwrong++; printf(" %f:%f:%f\n",gm[i],AUT1,ga[i]); }
    printf("sol %d(+%d), err: %d(+%d)\n",gN,gn,totalwrong,wrong);
 } // for (int sd=0; readNextSeed(fi); sd++)
 printf("# of noneq.(total) solutions: %d(of %d different)\n|Aut|*|Representatives|: ",gN,ttn);
 fout << "{LEND}\n};\n";
 if (gN<=10) for (int i=0; i<gN; i++) { printf("%.5f:%.5f:%e ", ga[i],gm[i],(gm[i]*ga[i]-1.0)); fout << ga[i] << " "; }
 fout<<"\n";
 printf("\n# of noneq.(total) solutions: %d(%d); d.-check err: %d==0?\n-=END=- (collisions: %d)  %.1f sec = %.1f min = %.1f h\n", 
        gN,ttn,totalwrong,collisions,((float)(clock()-t))/CLOCKS_PER_SEC,((float)(clock()-t))/CLOCKS_PER_SEC/60,((float)(clock()-t))/CLOCKS_PER_SEC/3600);
 fout<< "// 1st line S:[0 2 10]  S2:[11 15 40]  S3:[50 40 130]  S4:[90 85 320]  S5:[136 156 500]\n";
 fout<< "// W=" << data.WT << " H=" << data.hbit << " |seed|=" << sd_sz << 
   " #classes(total): " << gN << "(" << ttn << "); err: " << totalwrong << "\n";
 if (totalwrong) fout<<"!!!!! TOO MANY ERRORS !!!!!\n";
 fout.close();
}

%================================================= 
%================================================= 
%=================================================
 
RESPONSE LETTER 1

I am very grateful to the Reviewer for the evaluation of my work and constructive remarks. The text has been revised. Many language and other mistakes have been corrected. Some minor result has been added to the end of Section 7. Below, I comment how I address the concrete suggestions. For the reviewer's convenience, a pdf difference file is attached to this submission.

> -Something seems to be missing from the end of the title "...and related"

The title has been changed

> -Abstract: Remove the different notations (OA_{12}...)

done

> -Abstract: nonequivalent -> inequivalent [correct terms: nonisomorphic, inequivalent]

done

> -Introduction: Define an orthogonal array. Also think about how you talk about
 OAs in the paper. Now a coding framework is used for the terminology, with
 terms like "word" and "position". But OAs have rows and columns, I think. Of course,
 the paper also touch codes, but talking about "position" for OAs is like talking
 about "rows" for codes, so you need to think carefully about all this.
 
In the introduction, orthogonal arrays are now defined in a traditional way.
After the main definition (Remark 1 in Section 2), the alternative terminology is now discussed.
In the paper, only the code language is used,
according the definition in Section 2 (and now, Remark 1)
and the reader should not be confused.

> -p.2,l.10: remove "an easy operation"

done

> -p.2,l.15: since "local search" is a reserved term in optimization, "local
   exhaustive search" is confusing. Just talk about exhaustive search; you
   have a special way of carrying out the exhaustive search, but so have many
   other studies as well.
   
done   

> -p.2,ll.15-26: I think this paragraph goes into too much detail. Especially the last part of the paragraph could be compressed. Notice that the reader is not necessarily familiar with exact cover.

compressed
   
> -p.2,l.-10 -> Fon-Der-Flaass

corrected

> -p.3,l.1 proof -> prove

done

> -p.4,l.-16 -> if the set of vectors with function value 1 is an

done

> -p.5,l.-1 What is non-simple? Better: write explicitly "with repeated rows/columns".

rephrased: "for non-simple arrays (with repeated elements)". The concept "simple OA" is standard and defined in the definitions section

> -Section 4: There is now a huge risk of misunderstanding the definition and think that P_0 is the set of words starting with 0 and P_1 the set of words starting with 1 (+ the constraints on weights). Hence, it would be better to use P_a and P_b or something like that.

Fixed. Now $P_+$ and $P_-$.
  
> -Section 4.2.3: The validation procedure seems to make sense but the description is a bit vague. But if you get all these numbers by inspecting the classified objects, then you might inevitably get equality. Or how do you get them?
 
 More explanations are given.
 
> Also, when you talk about numbers here, do you consider the number of equivalence classes or the number of labelled structures?
 
All text has been checked for difference between objects and equivalence classes. When it was unclear, it is now explicitly written. Equivalence classes are explicitely indicated by one of the following forms, in examples:

a) "There are 20 equivalence classes"

b) "There are exactly 3 ..., up to equivalence " (the same as previous, but alows to continue "One of them has ...")

c) "we obtain $3$ inequivalent orthogonal arrays ..." "we keep all inequivalent local partitions" (again, we are saying about the equivalence classes, but meaning some set of representatives, the choice of which are not important)

d) "the unique OA" (from the context it is obvious that "unique" is "up to equivalence"

By default, if we are talking about an OA, we mean a concrete object (a set of words), which is clear from the definition. In particular, the phrase like "we characterize all the OA(...)". Means that we say something about the all class of OA of given parameters, but the characterisation can be like "The set of all OA(...) consists of three equivalence classes".

> -p.12,l.-9 As in the previous comment, and perhaps concerning also other parts of the paper, there should not be an ambiguity about whether equivalence classes or labelled structures are considered. In this place, the word "all" is used in the latter meaning but "(in the last position) all" is a bit strange because no one would do that, so it would be better to directly talk about "shortening representatives of equivalences classes in arbitrary rows" (or columns, depending on how you turn the array; cf. earlier discussion of terminology). 

Rephrased

> BTW, when shortening, don't you have to consider row AND which value (0 or 1) to consider in that row???
 
New sentence before Theorem 2 is now clarifies that the results of 0-shortening and 1-shortening are equivalent ($a$-shortening is now defined in the definition section).
 
> -p.12,l.-7 This doesn't just happen but there is a reason. In Theorem 2 you have looked at the action of the automorphism group on the columns of the OA, but you can also consider the action on the rows, actually on the (row,value) pairs. The number of orbits then gives the number of possible shortened OA(768,12,2,6) !!! Obviously, you can get this piece of information directly from nauty.
 
Rephrased: "Under the action of the automorphism group of the OA$(1536,13,2,7)$, the positions are divided into three orbits ..."
 
> -Section 6: I guess the Fourier transform construction is new, but is it really necessary to spend a lot of space discussing the construction of Fon-Der-Flaass? For example, switching obviously gives nothing as there is a unique object. All in all, remove whatever is not really necessary for the current paper.
 
I shortened the description of the construction in Section 6.1. In Particular, the unnecessary phrases related to the general construction were removed. However, I think the short description is still necessary by the following reasons
1) The Fourier transform in Section 6.2 is given for this concrete representative of the equivalence classes of OA(1536,13,2,7). Any further reseachers who whant to study these partition (for example, to generalize the construction) are better to have both representations as a starting point.
2) This is indeed a very partial case of the Fon-Der-Flaass construction and the presentation is essentially simpler for this case. I do not consider this as my result, but I did make some attempts to represent it in a very short explicite form, such that one now can easily understand how it works (but not "why"; for proof, one needs to read [6]) for theoretical work, or generate the array directly in any programming language. For example, here is a python program that construct the OA directly following the instructions in Section 6.1.

########################################################

CM={0b000000:5, 0b100000:5, 0b111111:5, 0b011111:5,
    0b000110:4, 0b010110:4, 0b111001:4, 0b101001:4,
    0b000011:3, 0b001011:3, 0b111100:3, 0b110100:3,
    0b010001:2, 0b010101:2, 0b101110:2, 0b101010:2,
    0b011000:1, 0b011010:1, 0b100111:1, 0b100101:1, 
    0b001100:0, 0b001101:0, 0b110011:0, 0b110010:0,
   }

prt = lambda x: 1&(x^(x>>1)^(x>>2)^(x>>3)^(x>>4)^(x>>5)) # parity check

OA=[]
for c in CM: 
    for b in range(1<<6):
        OA.append( b + 64*(c^b) + 64*64*prt(b^((c^b)&(1<<CM[c]))) )

# OA is ready; now write it to file
out=open("oa.1536.13.2.7.txt","w")
for x in OA: 
    out.write(bin(x+64*64)[-12:]+"\n")

out.close()

########################################################

> -Theorem 5: Change the order of the text so that it will be Theorem + Proof and
 not Proof + Theorem.

done
 
> -Section 8: Don't number the paragraphs.

ok

> -References: Issue numbers are not necessary unless the journal requires you to include them. 

 In the current version, the bibliography is automatically generated from my bibtex database by using the Elsevier bibslyle file, recommended for this journal. So, I did the best to meet the formal requirements of the journal. Any further modifications of the style in the bibliography (like removing the issue numbers), which are not explicitely noted in the instructions for authors, can be easily done by the technical editors.
 
 My personal preferences are to keep the issue numbers; I usually include them (actually, it is automatically made by bibtex) unless the technical editors decide to remove.

%================================================= 
%================================================= 
%=================================================
 
RESPONSE LETTER 2

First of all, I would like to thank the reviewer for careful reading the revised version of manuscript and for the additional comments. I addressed then as indicated below. For convenience, a difference pdf file is attached.

> * p.2,middle: questionable is not the right word here

questionable -> unsolved

> * p.2,previous comment+3 lines: Similar -> A similar

Done

> * p.4,l.-3: The use of articles is not always correct, like
   "the" here

  -> "a different language"

> * p.12,l.6: equivalent -> equivalence [but see below]

corrected

> -Table 1: Omit periods in table captions that are not complete sentences.

Done

> -Proposition 2: Results should be general, and explanations (like what
 happens when q=2 here) can come later in the main text.

I agree that there is some "inperfectness" in this style, but also there are advantages: it is compact and helpful for the reader who follows the references to this proposition.
For example, I could write Proposition and then Corollary, but this makes it more complicate, unreasonably.
Currently, there is a Proposition with two claims (general case and binary). The second one (which has, by the way, an earlier reference [24]) is trivial from the first one, but formally can be treated well.

> -p.8, (III) Independent sets have been defined earlier, so the part
 "that is,..." can be omitted.

fixed

> -p.11,l.5 You could be more precise about "such vertex". Some mathematicians
 might also dislike the end of the sentence "3 minus ..." which could be
 written with mathematical notation.

Now: For each vertex $\bar u$ from $W_{1}^{3}\cap P_-$, 
denote $\alpha(\bar u)=3-\beta(\bar u)$,
where $\beta(\bar u)$ is the number of neighbors of $\bar u$ in $P_+$.

> -p.11,middle "famous" does not sound good.

Now: "well-known"

> -p.11,middle is "characteristic graph" an established term? If not,
 don't use it.

"characteristic" removed. Now, "a graph", and in the next phrase, "the corresponding graphs"

> -(5) As far as I can see, isn't N(P+',P-') just the number of solutions
 you get when you run your search program starting from some
 representative of (P+,P-) [And a better notation would then be
 N(P+,P-)]? 
That is, you need not investigate the
 accepted equivalence class representatives but you just take the
 number of all solutions from one branch of your computer search.

In the described variant, we need exactly N(P+',P-'), as we count 
only the solutions equivalent to (P+',P-'), for every (P+',P-').
I have revised this place and now it is clear how it is counted.

The variant of the validation (not to distingwish the equivalence classes) 
described by the Reviewer is also possible, but in that case we need to sum (4) over all equivalence classes found in the current brunch. Discribing this has more or less the same complexity as my current description.

> -Theorem 2 This is the first time the word orbit is used and you need
 to say what you have orbits of (apparently words).

Theorem rephrased; also "orbit" is now defined in Definition 3.

>-p.12,ll.-3-(-1) I think this is a complicated way of phrasing something
 easy.

Rephrased.

> -p.13,l.-4 It is not clear what exactly is done when switching. In
 particular, what do you mean by inverting? Complementing?

Now the explanation is more explicite. 

> -p.13,l.-1 I don't understand what is surprising here. If the object
 is unique, then switching will obviously always lead to something
 equivalent to that object???

Rephrased. I hope that the accent is now clear.
The word "surprising" here reflects only my personal expectations. 
This word, by definition, cannot be a consiquence of some mathematical fact,
it reflects our feeling when we learn some fact first time.

> -p.18,l.-10 You list several references. The fact that there are
 objects that cannot be extended/lengthened makes me think a paper
 by Ostergard and Pottonen (DCC, vol.59, 2011, 281-285), is there
 a connection (generally or specifically) to that paper?
 Moreover, the three references are not in numerical order here.

The reference inserted.

> -p.18,l.-5 remove one "the"

Done

Additionally some articles are corrected; ``partial'' is replaces by ``special''.